\numberwithin{equation}{section}
\newtheorem{theorem}{Theorem}[section]
\newtheorem{proposition}[theorem]{Proposition}
\newtheorem{lemma}[theorem]{Lemma}
\theoremstyle{definition}
\theoremstyle{remark}
\newtheorem{remark}[theorem]{Remark}
\newtheorem{claim}[theorem]{Claim}
\renewcommand{\hom}{\operatorname{Hom}}
\renewcommand{\ker}{\operatorname{Ker}}
\newcommand{\Z}{\mathbb{Z}}
\newcommand{\Q}{\mathbb{Q}}
\newcommand{\R}{\mathbb{R}}
\newcommand{\C}{\mathbb{C}}
\newcommand{\proj}{{\mathbb P}}
\newcommand{\HH}{\mathbb{H}}
\newcommand{\X}{X_{\Gamma}}
\newcommand{\Y}{Y_{\Gamma}}
\newcommand{\G}{\Gamma}
\newcommand{\D}{\mathcal{D}_{\Lambda}}
\newcommand{\DD}{\mathcal{D}_{\Lambda}^{\ast}}
\newcommand{\SL}{{\rm SL}_{2}(\mathbb{Z})}
\newcommand{\SLQ}{{\rm SL}_{2}(\mathbb{Q})}
\newcommand{\MQ}{M_{2}(\mathbb{Q})}
\newcommand{\LQ}{\Lambda_{\mathbb{Q}}}
\newcommand{\LC}{\Lambda_{\mathbb{C}}}
\newcommand{\LK}{\Lambda_{K}}
\newcommand{\LLK}{\Lambda_{K}'}
\newcommand{\LLQ}{\Lambda'_{\mathbb{Q}}}
\newcommand{\LLLQ}{\Lambda''_{\mathbb{Q}}}
\newcommand{\SpL}{{\rm Sp}(\Lambda)}
\newcommand{\SpLN}{{\rm Sp}(\Lambda, N)}
\newcommand{\SpLQ}{{\rm Sp}(\Lambda_{\mathbb{Q}})}
\newcommand{\SpLLQ}{{\rm Sp}(\Lambda'_{\mathbb{Q}})}
\newcommand{\SpLLLQ}{{\rm Sp}(\Lambda''_{\mathbb{Q}})}
\newcommand{\UL}{{\rm U}(\Lambda)}
\newcommand{\ULN}{{\rm U}(\Lambda, N)}
\newcommand{\SULK}{{\rm SU}(\Lambda_{K})}
\newcommand{\SUL}{{\rm SU}(\Lambda)}
\newcommand{\SULN}{{\rm SU}(\Lambda, N)}
\DeclareMathOperator{\tr}{tr}
\begin{document}

%%%%%%% Title %%%%%%%%%%%%%%%%%%%%%%%%
\title[]{Rational equivalence of cusps}
\author[]{Shouhei Ma}
\thanks{Supported by JSPS KAKENHI 15H05738 and 17K14158.} 
\address{Department~of~Mathematics, Tokyo~Institute~of~Technology, Tokyo 152-8551, Japan}
\email{ma@math.titech.ac.jp}
\subjclass[2010]{14G35, 14C15, 11F55, 11F46}
\keywords{modular variety, Baily-Borel compactification, cusp, Chow group, 
Manin-Drinfeld theorem, modular unit, higher Chow cycle} 
%\dedicatory{}

\begin{abstract}
We prove that two cusps of the same dimension in the Baily-Borel compactification 
of some classical series of modular varieties are linearly dependent 
in the rational Chow group of the compactification. 
%for the following three types of modular varieties: %for congruence subgroups: 
%orthogonal modular varieties, Siegel modular varieties and modular varieties of unitary type. 
This gives a higher dimensional analogue of the Manin-Drinfeld theorem. 
As a consequence, we obtain a higher dimensional generalization of 
modular units as higher Chow cycles on the modular variety. 
\end{abstract} 

\maketitle

%%%Introduction

\section{Introduction}\label{sec: intro}

The classical theorem of Manin and Drinfeld (\cite{Ma}, \cite{Dr}) asserts that 
the difference of two cusps is torsion 
in the Picard group of the modular curve for a congruence subgroup of ${\SL}$. 
This had stimulated the development of 
the theory of modular units and cuspidal class groups (see \cite{K-L}). 
The original proof used modular symbols and Hecke operators on the cohomology of the modular curve 
(\cite{Ma}, \cite{Dr}). 
Later an interpretation in terms of the mixed Hodge structure of the modular curve minus the cusps 
was also found (\cite{El}). 

Our purpose in this paper is to prove a generalization of the Manin-Drinfeld theorem  
for cusps in the Baily-Borel compactification of some higher dimensional classical modular varieties. 
In higher dimension cusps are no longer divisors, but algebraic cycles of various codimension. 
We wish to clarify their contribution to the Chow group of the Baily-Borel compactification. 

The modular varieties of our object of study are 
of the following three types: 
\begin{enumerate}
\item modular varieties of orthogonal type attached to rational quadratic forms 
of signature $(2, n)$, which have only $0$-dimensional and $1$-dimensional cusps; 
\item Siegel modular varieties attached to rational symplectic forms; and 
\item modular varieties of unitary type, including the Picard modular varieties, 
attached to Hermitian forms over imaginary quadratic fields. 
\end{enumerate}
In Cartan's classification of irreducible Hermitian symmetric domains, 
these correspond to the domains $\mathcal{D}$ of type IV, III, I respectively. 
The Baily-Borel compactification (\cite{B-B}) of the modular variety ${\G}\backslash \mathcal{D}$ 
for an arithmetic group ${\G}$ 
is obtained by adjoining rational boundary components to $\mathcal{D}$ 
and then taking quotient by ${\G}$. 
Below, by a \textit{cusp} we mean 
the closure of the image of a rational boundary component 
in the Baily-Borel compactification. 

Our main results are the following. 

\begin{theorem}[orthogonal case]\label{thm: orthogonal}
Let $\Lambda$ be an integral quadratic lattice of signature $(2, n)$, 
${\G}$ a congruence subgroup of the orthogonal group ${\rm O}^{+}(\Lambda)$, 
and ${\X}$ the Baily-Borel compactification of the modular variety defined by ${\G}$. 
Let $Z_{1}, Z_{2}$ be two cusps of ${\X}$ of the same dimension, say $k\in \{ 0, 1 \}$. 
Assume that $n\geq 3$ if $k=1$. 
Then we have ${\Q}[Z_{1}]={\Q}[Z_{2}]$ 
in the rational Chow group $CH_{k}({\X})_{{\Q}}=CH_{k}({\X})\otimes_{{\Z}}{\Q}$ of ${\X}$. 
\end{theorem}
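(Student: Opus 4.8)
The plan is to translate the geometry of cusps into the linear algebra of isotropic subspaces of $\LQ$ and to exploit the transitivity of the rational orthogonal group. The $0$-dimensional cusps of $\X$ correspond to $\G$-orbits of isotropic lines in $\LQ$, and the $1$-dimensional cusps to $\G$-orbits of rank-$2$ totally isotropic subspaces $J \subset \LQ$; the cusp attached to such a $J$ is the closure in $\X$ of a modular curve $\G_J \backslash \HH$ whose own cusps are precisely the isotropic lines lying in $J$. By Witt's theorem ${\rm O}^{+}(\LQ)$ acts transitively on isotropic lines and on rank-$2$ totally isotropic subspaces, so any two cusps $Z_1, Z_2$ of the same dimension are interchanged by some $g \in {\rm O}^{+}(\LQ)$. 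This $g$, realised as a Hecke correspondence, will be the source of the relation in the Chow group.

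For $k=0$ I would first relate two $0$-dimensional cusps lying in the closure of a single $1$-dimensional cusp $Z_J$. Since $Z_J$ is the image of a compactified modular curve, the classical Manin--Drinfeld theorem shows that the difference of any two of its cusps is torsion in the Chow group of that curve; pushing forward along the finite map from the curve to $\X$ gives equality of the corresponding $0$-dimensional cusp classes in $CH_0(\X)_{\Q}$. It then suffices to connect an arbitrary pair of isotropic lines by a chain of such incidences. When the Witt index of $\LQ$ equals $2$ --- the only case producing $1$-dimensional cusps --- this is immediate: two non-orthogonal isotropic lines $\ell_1, \ell_2$ span a hyperbolic plane $H$, and $H^{\perp}$, of signature $(1, n-1)$ and Witt index $1$, contains an isotropic line $\ell_3$ orthogonal to both, giving the path $\ell_1, \ell_3, \ell_2$ in the incidence graph. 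Hence all $0$-dimensional cusps coincide in $CH_0(\X)_{\Q}$. (When the Witt index is $1$ there are no $1$-dimensional cusps, and one must instead argue as in the case $k=1$ below, applied to $0$-cycles.)

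The case $k=1$ carries the real content, and I would handle it through Hecke correspondences, in the spirit of Manin and Drinfeld. Choosing $g$ with $g J_1 = J_2$, set $\G'' = \G \cap g \G g^{-1}$, a finite-index congruence subgroup, and let $a, b \colon X_{\G''} \to \X$ be the two finite morphisms induced by the inclusion $\G'' \subset \G$ and by translation by $g$. The $1$-dimensional cusp of $X_{\G''}$ attached to $J_1$ is carried by $a$ onto $Z_1$ and by $b$ onto $Z_2$, so the correspondence $b_{\ast} a^{\ast}$ links $[Z_1]$ to $[Z_2]$, modulo the combinatorics of how cusps split in the cover. The aim is to show that, with $\Q$-coefficients, this correspondence sends the line $\Q[Z_1]$ into $\Q[Z_2]$, so that the two classes are proportional.

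The principal obstacle, I expect, is to extract from the correspondence an honest rational proportionality rather than a merely numerical one. In the classical Manin--Drinfeld theorem this is achieved by diagonalising the Hecke operators on $H^1$ and observing that the cuspidal classes are Eisenstein eigenvectors whose eigenvalues cannot be roots of unity; the analogue here would be to understand the eigenvalues of $b_{\ast} a^{\ast}$ on the span of the $1$-dimensional cusp classes in $CH_1(\X)_{\Q}$ and to separate this span from the remainder of the Chow group. The technical heart is therefore a precise computation of how the cusp curves split in the cover $X_{\G''} \to \X$ and of the multiplicities with which $b_{\ast}$ recombines them; controlling these multiplicities uniformly, so that $[Z_1]$ and $[Z_2]$ emerge as genuine scalar multiples in the rational Chow group, is where I anticipate the main difficulty to lie.
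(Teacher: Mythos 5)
Your treatment of the $k=0$ case is correct precisely where it overlaps with the paper: two orthogonal isotropic lines are joined by a boundary modular curve and Manin--Drinfeld is pushed forward (this is the paper's \S\ref{sssec: 0dim cusp (I_{1}, I_{2})=0}), and your chain $\ell_{1}, \ell_{3}, \ell_{2}$ through $H^{\perp}$ in the Witt index $2$ case is exactly the paper's Remark \ref{remark1}. Everything else, however, rests on the Hecke-correspondence sketch, and there the gap is genuine --- and you acknowledge it yourself. For $g$ with $gJ_{1}=J_{2}$ and $\Gamma''=\G\cap g\G g^{-1}$, the class $b_{\ast}a^{\ast}[Z_{1}]$ is a priori a ${\Q}$-combination of the classes of \emph{all} cusps $b(\tilde{Z})$, $\tilde{Z}$ running over the cusps of $X_{\Gamma''}$ above $Z_{1}$; nothing in the proposal makes this proportional to $[Z_{2}]$. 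Worse, the mechanism that rescues the classical argument is unavailable here: Manin and Drinfeld separate the Eisenstein (cuspidal) classes from the rest by diagonalizing Hecke operators on $H^{1}$ of a smooth curve and comparing eigenvalues, whereas for $CH_{1}({\X})_{{\Q}}$ of the singular projective variety ${\X}$ there is no finite-dimensional spectral theory and no injectivity of any cycle class map to lean on. So the ``main difficulty'' you defer is not a technical verification but the entire problem, and it also infects your $k=0$ argument whenever ${\LQ}$ has Witt index $1$ (no isotropic planes exist there, and all distinct isotropic lines are non-orthogonal), since you defer that case to the same unproved correspondence argument.

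The idea you are missing --- and the way the paper proves both remaining cases without any Hecke theory --- is to join the two cusps \emph{through the interior} of ${\X}$ by a finite image of a smaller modular variety and push forward the classical Manin--Drinfeld relation along it. For two non-orthogonal isotropic lines $I_{1}, I_{2}$ (which covers Witt index $1$), one takes a positive-norm vector $v\in (I_{1}\oplus I_{2})^{\perp}$ and the signature $(2,1)$ subspace $I_{1}\oplus I_{2}\oplus {\Q}v$; via the exceptional isomorphism of ${\rm O}^{+}$ of a $(2,1)$-space with ${\rm PSL}_{2}({\Q})$ (trace-zero matrices with the trace form), one gets for a suitable level a finite morphism $X(N)\to {\X}$ carrying two cusps of $X(N)$ to $p_{1}, p_{2}$, and Manin--Drinfeld on $X(N)$ finishes the proof by pushforward. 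For $k=1$ and $J_{1}\cap J_{2}=\{0\}$, the hypothesis $n\geq 3$ forces the pairing between $J_{1}$ and $J_{2}$ to be perfect (as $J_{i}^{\perp}/J_{i}$ is negative definite), so $J_{1}\oplus J_{2}\simeq 2U_{{\Q}}$; the Segre isomorphism ${\HH}\times{\HH}\simeq \mathcal{D}_{2U}$ then yields a finite morphism $X(N)\times X(N)\to {\X}$ carrying $X(N)\times(\tau_{2}=0)$ and $X(N)\times(\tau_{2}=i\infty)$ onto $Z_{1}, Z_{2}$, and Manin--Drinfeld on the second factor, pulled back to the product and pushed forward, gives ${\Q}[Z_{1}]={\Q}[Z_{2}]$. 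Finally, when $J_{1}\cap J_{2}\neq\{0\}$, one writes $J_{1}=I\oplus I_{1}$, $J_{2}=I\oplus I_{2}$ with $I=J_{1}\cap J_{2}$, uses that $(I_{1}\oplus I_{2})^{\perp}$ has signature $(1,n-1)$ with $n-1\geq 2$ and contains infinitely many isotropic lines, picks $I_{3}, I_{4}$ there with $I, I_{3}, I_{4}$ independent, and connects $J_{1}$ to $J_{2}$ by the chain $J_{3}=I_{4}\oplus I_{2}$, $J_{4}=I_{3}\oplus I_{1}$ of isotropic planes meeting consecutively in $\{0\}$, applying the previous case three times.
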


\begin{theorem}[symplectic case]\label{thm: symplectic}
Let $\Lambda$ be an integral symplectic lattice, 
${\G}$ a congruence subgroup of the symplectic group ${\rm Sp}(\Lambda)$,  
and ${\X}$ the Satake-Baily-Borel compactification of the Siegel modular variety defined by ${\G}$. 
If $Z_{1}, Z_{2}$ are two cusps of ${\X}$ of the same dimension, say $k$, 
then ${\Q}[Z_{1}]={\Q}[Z_{2}]$ in $CH_{k}({\X})_{{\Q}}$. 
\end{theorem}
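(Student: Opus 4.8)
The plan is to reduce the statement to the classical Manin--Drinfeld theorem on modular curves, which enters as the base case, and to propagate it through the boundary stratification of the Satake--Baily--Borel compactification by means of finite Hecke correspondences and the projection formula. First I would recall the parametrization of cusps: a cusp of $\X$ of dimension $k$ is the closure of the image of a rational boundary component, and these correspond bijectively to the $\G$-orbits of isotropic subspaces $I\subset \La_{\mathbb{Q}}$ of a fixed dimension $r$ (depending on $k$), the corresponding cusp being canonically the Satake--Baily--Borel compactification of a Siegel modular variety of lower genus, for the arithmetic group $\G_{I}$ obtained from the stabilizer of $I$. By Witt's theorem $\SpLQ$ acts transitively on the isotropic subspaces of dimension $r$, so any two cusps $Z_{1},Z_{2}$ of dimension $k$ correspond to isotropic subspaces $I_{1},I_{2}$ with $gI_{1}=I_{2}$ for some $g\in \SpLQ$. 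Thus the whole problem becomes showing that the classes of two cusps in a single $\SpLQ$-orbit are $\mathbb{Q}$-proportional.

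The transfer mechanism I would use is the following. For $g\in \SpLQ$ put $\G'=\G\cap g^{-1}\G g$, a congruence subgroup, and let $p\colon X_{\G'}\to \X$ and $q\colon X_{\G'}\to \X$ be the two finite morphisms on Satake--Baily--Borel compactifications induced respectively by the inclusion $\G'\subset\G$ and by $z\mapsto gz$. Both extend the corresponding maps of the open varieties and are finite surjective, so $p_{*}p^{*}$ is multiplication by $\deg p$ on $CH_{k}(\X)_{\mathbb{Q}}$, and likewise for $q$; moreover $p$ and $q$ carry cusps to cusps. A cusp $\tilde{Z}$ of $X_{\G'}$ over $I_{1}$ satisfies $p(\tilde{Z})=Z_{1}$ and $q(\tilde{Z})=Z_{2}$. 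The content of the proof is then to organize these pushforward/pullback relations, together with the $\G/\G(N)$-action on a principal-level cover, into an actual proportionality. Here one must first handle the codimension-one boundary: inside a cusp $Z'$ of dimension $k+1$, which is itself a lower Satake--Baily--Borel compactification, its boundary divisors are exactly the dimension-$k$ cusps lying in $\overline{Z'}$, and I would show these are $\mathbb{Q}$-proportional in $\mathrm{Pic}(\overline{Z'})_{\mathbb{Q}}=CH_{k}(\overline{Z'})_{\mathbb{Q}}$ using the relation to the Hodge (determinant-of-automorphy) line bundle and the graded ring of modular forms of which the compactification is the $\mathrm{Proj}$. Pushing such relations forward along $\overline{Z'}\hookrightarrow \X$, and chaining through the incidence of boundary strata, reduces everything to the bottom of the stratification.

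The base case is a one-dimensional cusp, which is a compactified modular curve for an $\mathrm{SL}_{2}$-type subgroup; there the classical theorem of Manin and Drinfeld asserts that any two of its cusps are $\mathbb{Q}$-proportional in $CH_{0}=\mathrm{Pic}$, and this is precisely the engine that produces the modular units invoked in the introduction. The main obstacle I anticipate is the propagation step: the Hecke/finite-cover operations yield, a priori, only a $\mathbb{Q}$-linear combination of cusps (a single correspondence sends $[Z_{1}]$ into the span of several cusps, not to a multiple of $[Z_{2}]$), so the heart of the matter is to show that the resulting system of relations has rank forcing genuine proportionality. This requires controlling the ramification multiplicities of the extended boundary maps and exploiting, at each stage, the divisor-proportionality lemma inside a one-higher-dimensional cusp together with the transitivity of $\SpLQ$; it is here that the inductive structure, rather than any single correspondence, is indispensable.
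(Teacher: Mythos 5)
Your proposal has two genuine gaps, and they sit exactly where the load-bearing steps should be. First, the propagation step through the boundary stratification rests on a false picture of that stratification. In the Satake--Baily--Borel compactification of a genus $g''$ Siegel modular variety the boundary has codimension $g''$, so boundary components are divisors only when $g''=1$; moreover cusp dimensions are the triangular numbers $g''(g''+1)/2$, so a ``cusp of dimension $k+1$ whose boundary divisors are the dimension-$k$ cusps'' exists only for $k=0$, in which case $\overline{Z'}$ is a compactified modular curve and the proportionality you want in $\mathrm{Pic}(\overline{Z'})_{\mathbb{Q}}$ \emph{is} the Manin--Drinfeld theorem. It cannot be derived from the Hodge line bundle or the $\mathrm{Proj}$ description of the compactification: such a purely geometric argument would apply verbatim to non-congruence subgroups of ${\rm SL}_2(\mathbb{Z})$, for which the statement is false. (What is true, and what the paper uses, is that cusps of positive codimension in a bigger cusp $\overline{Z'}$ are handled by \emph{induction on the genus} applied to $\overline{Z'}$ itself, which is again a Siegel modular variety, followed by pushforward.) Second, and more fundamentally, the top-dimensional cusps are not contained in the closure of any other cusp, so no amount of propagation through boundary incidence can relate two of them; any proof must pass through the interior. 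Your only interior mechanism is the Hecke correspondence $p,q\colon X_{\Gamma'}\to X_{\Gamma}$, and there you explicitly leave open the decisive point: $q_{*}p^{*}[Z_1]$ is a combination of many cusps, and showing that the resulting system of relations ``has rank forcing genuine proportionality'' is precisely the hard part (for modular curves this is where Manin and Drinfeld needed eigenvalue separation of Hecke operators on cohomology). Nothing in the proposal supplies that argument in higher dimension, so the proof does not close.

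For contrast, the paper's route avoids Hecke correspondences entirely. It inducts on the genus and splits into three cases according to the relative position of the isotropic subspaces $I_1, I_2$: if $I_1\cap I_2\neq \{0\}$, both cusps lie in the boundary of the cusp attached to $I_1\cap I_2$, a lower-genus Siegel modular variety for a congruence group, and one applies the induction hypothesis there and pushes forward; if the pairing between $I_1$ and $I_2$ is perfect and $\dim I_i=1$ (the top-dimensional cusps), one uses the splitting $\Lambda_{\mathbb{Q}}=(I_1\oplus I_2)\perp(I_1\oplus I_2)^{\perp}$ to build a finite morphism $X'\times X''\to X_{\Gamma}$ from (modular curve) $\times$ (genus $g-1$ Siegel modular variety) sending $p_i\times X''$ onto $Z_i$, and pulls back the Manin--Drinfeld relation $[p_1]=[p_2]$ from the curve factor before pushing forward; all remaining cases are chains of these two via auxiliary isotropic subspaces $I_3, I_4$. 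The only arithmetic input is Manin--Drinfeld on curves, and the congruence hypothesis enters through elementary level-comparison arguments showing the sub-modular varieties carry congruence groups mapping into $\Gamma$. If you want to salvage your plan, the realistic options are either to carry out a genuinely new Hecke-theoretic argument in higher rank, or to replace both of your transfer mechanisms by the paper's chain-of-submodular-varieties construction.
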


\begin{theorem}[unitary case]\label{thm: unitary}
Let $K$ be an imaginary quadratic field, 
$\Lambda$ a Hermitian lattice over $\mathcal{O}_{K}$, 
${\G}$ a congruence subgroup of the unitary group ${\rm U}(\Lambda)$, 
and ${\X}$ the Baily-Borel compactification of the modular variety defined by ${\G}$. 
If $Z_{1}, Z_{2}$ are two cusps of ${\X}$ of the same dimension, say $k$, 
then ${\Q}[Z_{1}]={\Q}[Z_{2}]$ in $CH_{k}({\X})_{{\Q}}$. 
\end{theorem}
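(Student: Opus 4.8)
The plan is to parametrize the cusps, reduce to isotropic \emph{lines} (``rank one''), and there deduce the result from the classical Manin--Drinfeld theorem via an embedded modular curve. Let $V=\LK\otimes_{\mathcal{O}_K}K$ be the Hermitian $K$-space, of signature $(p,q)$ with $p\geq q$, so $\dim\mathcal{D}=pq$. The cusps of $\X$ are indexed by $\G$-orbits of isotropic $K$-subspaces $I\subset V$, the cusp $Z_I$ being the Baily--Borel compactification of the modular variety for ${\rm U}(I^{\perp}/I)$, of dimension $(p-r)(q-r)$ where $r=\dim_K I$. As $r\mapsto(p-r)(q-r)$ is strictly decreasing on $[0,q]$, two cusps have equal dimension iff the associated subspaces have equal rank $r$, and by Witt's theorem $G(\Q)={\rm U}(V)(\Q)$ permutes these transitively. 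First I would reduce to $r=1$, then settle $r=1$ directly.

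For the reduction I would join $I_1,I_2$ by a chain $I_1=L_0,\dots,L_m=I_2$ of rank $r$ isotropic subspaces with $\dim_K(L_j\cap L_{j+1})=r-1$; such chains exist by connectivity of the incidence graph of the isotropic Grassmannian in the relevant range (the low Witt index cases being checked separately). Setting $J=L_j\cap L_{j+1}$, the closed subvariety $Z_J\subset\X$ is the Baily--Borel compactification of the modular variety for ${\rm U}(J^{\perp}/J)$, and it contains $Z_{L_j}$ and $Z_{L_{j+1}}$ as the cusps attached to the isotropic \emph{lines} $L_j/J$ and $L_{j+1}/J$ of $J^{\perp}/J$. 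Since proper push-forward along $Z_J\hookrightarrow\X$ respects rational equivalence, it suffices to prove $\Q[Z_{\ell_1}]=\Q[Z_{\ell_2}]$ for rank one cusps $\ell_1,\ell_2$ of an arbitrary unitary modular variety; chaining these equalities gives $\Q[Z_1]=\Q[Z_2]$.

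For $r=1$, after inserting an intermediate line if needed, I may assume $\ell_1,\ell_2$ span a Hermitian hyperbolic plane $H$ inside the ambient space $V'$ of signature $(p',q')$, with $H^{\perp}$ of signature $(p'-1,q'-1)$. The subgroup ${\rm U}(H)\times{\rm U}(H^{\perp})$ gives a sub-modular variety $\overline{Y}_H\subset\X$ sharing a common finite cover with a product $\overline{C}\times\overline{M}$, where $\overline{C}$ is the modular curve attached to ${\rm U}(H)$ (here $H$ is split, so the derived group is $\Q$-isomorphic to $SL_2$ and $\overline{C}$ is a genuine congruence modular curve) and $\overline{M}$ that attached to ${\rm U}(H^{\perp})$. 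The two cusps $c_1,c_2$ of $\overline{C}$ correspond to $\ell_1,\ell_2$, and because $\ell_i^{\perp}/\ell_i\cong H^{\perp}$, the strata $\{c_i\}\times\overline{M}$ transport, through the common cover, to the cusps of $\overline{Y}_H$ that map finitely and surjectively, of degrees $d_i>0$, onto $Z_{\ell_i}$ (all of dimension $(p'-1)(q'-1)$). Now classical Manin--Drinfeld on $\overline{C}$ gives $[c_1]=[c_2]$ in $CH_0(\overline{C})_{\Q}$, so by the exterior product $[\{c_1\}\times\overline{M}]=[c_1]\times[\overline{M}]=[c_2]\times[\overline{M}]=[\{c_2\}\times\overline{M}]$ in $CH_k(\overline{C}\times\overline{M})_{\Q}$. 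Transporting this to $\overline{Y}_H$ and pushing forward to $\X$ yields $d_1[Z_{\ell_1}]=d_2[Z_{\ell_2}]$ with $d_1,d_2>0$, hence $\Q[Z_{\ell_1}]=\Q[Z_{\ell_2}]$.

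The main obstacle is the book-keeping of arithmetic groups underlying the $r=1$ step: realizing both cusps as the boundary strata $\{c_i\}\times\overline{M}$ of a single embedded product, and checking that the induced maps onto $Z_{\ell_i}$ are finite and surjective. This amounts to showing that $\G\cap({\rm U}(H)\times{\rm U}(H^{\perp}))$, its projections to the two factors, and the arithmetic group cut out on the boundary component ${\rm U}(\ell_i^{\perp}/\ell_i)$ are pairwise commensurable; these commensurabilities follow from strong approximation together with the functoriality of the Baily--Borel compactification along ${\rm U}(H)\times{\rm U}(H^{\perp})\hookrightarrow{\rm U}(V')$, but making them precise, and handling the low-signature cases of the chain argument, is where the real work lies. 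The same scheme---with $SL_2$ replaced by the relevant rank one group and $H$ by a split binary quadratic or symplectic form---should apply to the orthogonal and symplectic cases as well.
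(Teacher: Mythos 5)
Your outline is correct in substance and runs on the same two engines as the paper's proof: (i) pushforward along the finite map from the boundary modular variety attached to an intersection of isotropic subspaces, and (ii) pushforward from a product (modular curve)$\times$(modular variety) attached to a splitting $H\perp H^{\perp}$, with classical Manin--Drinfeld on the curve factor as the ultimate source of all relations; your insertion of an intermediate line when $(\ell_1,\ell_2)\equiv 0$ is also exactly the paper's move in case (3) of \S\ref{ssec: induction unitary}. The difference is organizational. The paper inducts on the dimension of the Hermitian space and distinguishes three cases by the relative position of $I_1,I_2$ (nontrivial intersection; perfect pairing; neither), so its boundary step only needs chains with \emph{nontrivial} intersections, the two neighbors in $J^{\perp}/J$ being handled by the inductive hypothesis whatever their rank. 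You instead demand consecutive intersections of dimension exactly $r-1$, which forces the neighbors to be rank-one cusps of the stratum and eliminates the induction on dimension entirely; this is arguably cleaner, but it requires the sharper connectivity lemma that you defer. That lemma is true and elementary: reducing modulo $W=I_1\cap I_2$, if some $0\ne x\in I_2$ pairs trivially with $I_1$ take $L=H_1\oplus\langle x\rangle$ for a hyperplane $H_1\subset I_1$, and if the pairing is perfect replace one basis vector of $I_1$ by its dual-basis vector in $I_2$; either way $L$ is isotropic, adjacent to $I_1$, and strictly closer to $I_2$, so one recurses. So your reduction can be completed, though you should prove this rather than cite ``connectivity of the incidence graph.''

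The one place where your deferral conceals a real necessity is the arithmetic book-keeping, which you propose to settle by ``pairwise commensurability'' via strong approximation. Commensurability is not the right notion: Manin--Drinfeld holds for \emph{congruence} subgroups of ${\rm SL}_2(\mathbb{Z})$ and fails for general finite-index subgroups, so it is not enough that the group induced on your curve $\overline{C}$ be commensurable with ${\rm SL}_2(\mathbb{Z})$ --- you must prove it is conjugate to a congruence subgroup. Concretely you need: (a) the image $\Gamma_J$ of the stabilizer of $J$ in ${\G}$, acting on $J^{\perp}/J$, is a congruence subgroup of ${\rm U}(\Lambda')$ for some lattice $\Lambda'\subset J^{\perp}/J$ (the paper proves the analogous statement by the elementary lattice-and-level comparison of Claim \ref{claim1} and \eqref{eqn: level argument}, not by strong approximation); and (b) under the $\mathbb{Q}$-isomorphism ${\rm SU}(H)\simeq{\rm SL}_2(\mathbb{Q})$, congruence subgroups of the unitary group of a lattice in $H$ map, up to conjugation, to congruence subgroups of ${\rm SL}_2(\mathbb{Z})$. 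Point (b) is precisely what \S\ref{ssec: q=1 unitary} establishes via orders in $M_2(\mathbb{Q})$ ($N_0\mathcal{O}_{max}\subset\mathcal{O}$, all maximal orders conjugate to $M_2(\mathbb{Z})$), and the authors note they included it exactly because no reference was available. With (a) and (b) supplied your outline becomes a complete proof. One last caution: your closing claim that the same scheme applies verbatim to the orthogonal case should be tempered, since Theorem \ref{thm: orthogonal} genuinely excludes $(n,k)=(2,1)$ --- for self-products of modular curves two boundary curves are not even homologically equivalent --- so any uniform chain/product argument must break down there.
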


Note that the equality ${\Q}[Z_{1}]={\Q}[Z_{2}]$ in $CH_{k}({\X})_{{\Q}}$ 
is the same as the equality $N_1[Z_1]=N_2[Z_2]$ in the integral Chow group $CH_{k}({\X})$ 
for some natural numbers $N_1, N_2$. 
When $k=0$, we must have $N_1=N_2$, 
so $[Z_{1}]-[Z_{2}]$ is torsion in $CH_{0}({\X})$. 

In the symplectic case, 
when $\Lambda$ has rank $\geq 4$, 
every finite-index subgroup of ${\rm Sp}(\Lambda)$ is a congruence subgroup 
by Mennicke \cite{Me} and Bass-Lazard-Serre \cite{B-L-S}. 
The case ${\rm rk}(\Lambda)=2$ is just the case of modular curves. 

The case $(n, k)=(2, 1)$ in the orthogonal case is indeed an exception. 
We have self products of modular curves as typical examples of ${\X}$ in $n=2$, 
for which two transversal boundary curves are not homologically equivalent. 
On the other hand, we should note that 
some consideration in the case $n=2$ is necessary for our proof for the case $n\geq3$. 

The proof of Theorems \ref{thm: orthogonal} -- \ref{thm: unitary} is based on the same simple idea. 
We connect $Z_{1}$ and $Z_{2}$ by a chain of sub modular varieties or their products, 
through the interior or the boundary, 
and use induction on the dimension of modular varieties. 
This eventually reduces the problem to the Manin-Drinfeld theorem for modular curves. 
The actual argument requires case-by-case construction 
depending on the combinatorics of rational boundary components. 
We need to argue the three cases separately,  
though the symplectic and the unitary cases are similar. 
Theorem \ref{thm: orthogonal} is proved in \S \ref{sec: orthogonal}; 
Theorem \ref{thm: symplectic} in \S \ref{sec: symplectic}; and 
Theorem \ref{thm: unitary} in \S \ref{sec: unitary}. 

In \S \ref{sec: higher modular unit}, 
as a consequence of these results, 
we associate an explicit nonzero element of the higher Chow group 
$CH_{k}({\G}\backslash \mathcal{D}, 1)_{{\Q}}$ 
of the modular variety ${\G}\backslash \mathcal{D}$ (before compactification) 
to each pair $(Z_{1}, Z_{2})$ of cusps of maximal dimension $k$. 
This gives a higher dimensional analogue of modular units from the viewpoint of algebraic cycles. 
If the span of all such higher Chow cycles on ${\G}\backslash \mathcal{D}$ 
has dimension no less than the number of maximal cusps, 
we would then obtain a nontrivial subspace of $CH_{k}({\X}, 1)_{{\Q}}$ 
for the Baily-Borel compactification ${\X}$. 
%(Compare with the fact that any holomorphic function on 
%${\G}\backslash \mathcal{D}$ must be constant by the Koecher principle.) 
%By our construction, this higher Chow cycle is essentially lifted from a classical modular unit. 

%Indeed, 
%for two cusps $Z_{1}, Z_{2}$ of dimension $k$, 
%the localization exact sequence of higher Chow groups (\cite{Bl}) 
%\begin{equation*}
%\cdots \to CH_{k}({\Y}, 1)_{{\Q}} \to CH_{k}({\X}-{\Y})_{{\Q}} \to CH_{k}({\X})_{{\Q}} \to 
%CH_{k}({\Y})_{{\Q}} \to 0 
%\end{equation*}
%tells us that 
%there exists an element of $CH_{k}({\Y}, 1)_{{\Q}}$  
%which is mapped to $Z_{1}-\alpha Z_{2}$ in $CH_{k}({\X}-{\Y})_{{\Q}}$ for a suitable $\alpha\in {\Q}$. 
%When $Z_{1}, Z_{2}$ are not top dimensional, 
%$Z_{1}-\alpha Z_{2}$ is already zero in $CH_{k}({\X}-{\Y})_{{\Q}}$ 
%(see Remarks \ref{remark1} and \ref{remark2}). 
%On the other hand, when $Z_{1}, Z_{2}$ are top dimensional, 
%$Z_{1}-\alpha Z_{2}$ is nonzero in $CH_{k}({\X}-{\Y})_{{\Q}}$, 
%so the higher Chow cycle does not vanish.  
%By our proof, 
%this higher Chow cycle  
%is essentially made out of modular units on modular curves.  

Throughout the paper $\Gamma(N)$ stands for the principal congruence subgroup 
of ${\SL}$ of level $N$, 
and $X(N)=\Gamma(N)\backslash{\HH}^{\ast}$ 
the (compactified) modular curve for $\Gamma(N)$.  
In \S \ref{sec: orthogonal} and \S \ref{sec: symplectic}, 
for a free ${\Z}$-module $\Lambda$ of finite rank, 
we denote by $\Lambda^{\vee}={\hom}_{{\Z}}(\Lambda, {\Z})$ its dual ${\Z}$-module and 
denote $\Lambda_{F}=\Lambda\otimes_{{\Z}}F$ for $F={\Q}, {\R}, {\C}$. 
For a ${\Q}$-vector space $V$ we also write 
$V^{\vee}={\hom}_{{\Q}}(V, {\Q})$ and 
$V_{F}=V\otimes_{{\Q}}F$ 
when no confusion is likely to occur.

%%%Orthogonal

\section{The orthogonal case}\label{sec: orthogonal}

In this section we prove Theorem \ref{thm: orthogonal}. 
We first recall orthogonal modular varieties (cf.~\cite{Sc}, \cite{Lo}). 
Let $\Lambda$ be a free ${\Z}$-module of rank $2+n$ equipped with 
a nondegenerate symmetric bilinear form 
$(\: , \: ) : \Lambda \times \Lambda \to {\Z}$ 
of signature $(2, n)$. 
Let 
\begin{equation*}
Q_{\Lambda} = \{ [{\C}\omega] \in {\proj}{\LC} \: | \: (\omega, \omega)=0 \} 
\end{equation*}
be the isotropic quadric in ${\proj}{\LC}$. 
The open set of $Q_{\Lambda}$ defined by the condition 
$(\omega, \bar{\omega})>0$ 
consists of two connected components, 
and the Hermitian symmetric domain ${\D}$ attached to $\Lambda$ is defined as 
one of them. 
This choice is equivalent to the choice of an orientation of 
a positive definite plane in $\Lambda_{{\R}}$. 

Let ${\rm O}(\Lambda)$ be the orthogonal group of $\Lambda$, 
namely the group of isomorphisms $\Lambda \to \Lambda$ preserving the quadratic form. 
We write ${\rm O}^{+}(\Lambda)$ for the subgroup of ${\rm O}(\Lambda)$ 
preserving the component ${\D}$. 
For a natural number $N$ let 
${\rm O}^{+}(\Lambda, N)<{\rm O}^{+}(\Lambda)$ 
be the kernel of the reduction map 
${\rm O}^{+}(\Lambda)\to {\rm GL}(\Lambda/N\Lambda)$. 
A subgroup ${\G}$ of ${\rm O}^{+}(\Lambda)$ is called a congruence subgroup 
if it contains ${\rm O}^{+}(\Lambda, N)$ for some level $N$. 
A typical example is the kernel of the reduction map 
${\rm O}^{+}(\Lambda) \to {\rm GL}(\Lambda^{\vee}/\Lambda)$ 
for the discriminant group $\Lambda^{\vee}/\Lambda$. 

There are two types of rational boundary components of ${\D}$, 
$0$-dimensional and $1$-dimensional components. 
$0$-dimensional components correspond to 
isotropic ${\Q}$-lines $I$ in ${\LQ}$: 
we take the point $p_{I}=[I_{{\C}}]\in Q_{\Lambda}$, 
which is in the closure of ${\D}$, 
for each such $I$. 
$1$-dimensional components correspond to 
isotropic ${\Q}$-planes $J$ in ${\LQ}$: 
we take the connected component of 
${\proj}J_{{\C}}-{\proj}J_{{\R}}\simeq {\HH}\sqcup {\HH}$, 
say ${\HH}_{J}$, that is in the closure of ${\D}$. 
The union 
\begin{equation*}
{\DD} = {\D} \sqcup \bigsqcup_{\dim J = 2} {\HH}_{J} \sqcup \bigsqcup_{\dim I =1} p_{I} 
\end{equation*}
is equipped with the Satake topology (\cite{B-B}, \cite{B-J}). 
By Baily-Borel \cite{B-B}, the quotient space 
${\X}={\G}\backslash{\DD}$ 
has the structure of a normal projective variety 
and contains ${\G}\backslash{\D}$ as a Zariski open set. 

In \S \ref{ssec: orthogonal 0dim cusp} we prove Theorem \ref{thm: orthogonal} 
for $0$-dimensional cusps, 
and in \S \ref{ssec: orthogonal 1dim cusp} for $1$-dimensional cusps. 
Throughout this section $U$ stands for the rank $2$ unimodular hyperbolic lattice 
with Gram matrix 
$\begin{pmatrix} 0 & 1 \\ 1 & 0 \end{pmatrix}$. 
The symbol $\Lambda_{1}\perp \Lambda_{2}$ stands for 
the orthogonal direct sum of two quadratic lattices (or spaces) $\Lambda_{1}, \Lambda_{2}$, 
while $\Lambda_{1}\oplus \Lambda_{2}$ just stands for 
the direct sum of $\Lambda_{1}, \Lambda_{2}$ as ${\Z}$-module (or linear space) 
and does not necessarily mean that $(\Lambda_{1}, \Lambda_{2})\equiv 0$.

\subsection{$0$-dimensional cusps}\label{ssec: orthogonal 0dim cusp}

In this subsection we prove Theorem \ref{thm: orthogonal} for $0$-dimensional cusps. 
Let $I_{1}\ne I_{2}$ be two isotropic lines in ${\LQ}$ 
and $p_{1}, p_{2}\in {\X}$ the corresponding $0$-dimensional cusps. 
We consider separately the cases where 
$(I_{1}, I_{2})\equiv 0$ or $(I_{1}, I_{2})\not\equiv 0$. 
In the former case $p_{1}$ and $p_{2}$ are joined by a boundary curve, 
while in the latter case they are joined by a modular curve through the interior of ${\X}$.

\subsubsection{The case $(I_{1}, I_{2})\equiv 0$}\label{sssec: 0dim cusp (I_{1}, I_{2})=0}
 
We first assume that $(I_{1}, I_{2})\equiv 0$. 
The direct sum $J=I_{1}\oplus I_{2}$ is an isotropic plane in ${\LQ}$. 
Let  
$
{\HH}_{J}^{\ast}={\HH}_{J}\sqcup \bigsqcup_{I\subset J} p_{I} 
$
and 
$\Gamma_{J} \subset {\rm SL}(J)$ be the image of the stabilizer of $J$ in ${\G}$. 
We have a generically injective morphism 
$f:X_{J}\to {\X}$ from the modular curve $X_{J}=\Gamma_{J}\backslash {\HH}_{J}^{\ast}$ 
whose image is the $1$-dimensional cusp associated to $J$.  

\begin{claim}\label{claim1}
$\Gamma_{J}$ is a congruence subgroup of ${\rm SL}(J_{{\Z}})$ 
where $J_{{\Z}}=J\cap \Lambda$. 
\end{claim}

\begin{proof}
There exists a rank $2$ isotropic sublattice $J'_{{\Z}}$ in ${\LQ}$ 
such that $J'_{{\Z}}\simeq (J_{{\Z}})^{\vee}$ by the pairing. 
The lattice $\Lambda_{1}= J_{{\Z}}\oplus J_{{\Z}}'$ is isometric to $U \perp U$. 
We set $\Lambda_{2}=(\Lambda_{1})^{\perp}\cap \Lambda$ 
and $\Lambda' = \Lambda_{1}\perp \Lambda_{2}$. 
Recall that ${\G}$ contains ${\rm O}^{+}(\Lambda, N)$ for some level $N$. 
Since both $\Lambda$ and $\Lambda'$ are full lattices in ${\LQ}$, 
we can find natural numbers $N_{1}, N_{2}$ such that 
\begin{equation*}
N_{1}\Lambda' \subset N\Lambda \subset \Lambda \subset N_{2}^{-1}\Lambda'. 
\end{equation*}
If we set $N'=N_{1}N_{2}$, this tells us that 
\begin{equation}\label{eqn: level argument}
{\rm O}^{+}(\Lambda', N')\subset {\rm O}^{+}(\Lambda, N) \subset {\G} 
\end{equation} 
inside
${\rm O}(\Lambda_{{\Q}})= {\rm O}(\Lambda'_{{\Q}})$. 
Now we have the embedding  
\begin{equation*}
{\SL} \simeq {\rm SL}(J_{{\Z}}) \hookrightarrow {\rm O}^{+}(\Lambda'), \qquad 
\gamma \mapsto (\gamma|_{J_{{\Z}}}) \oplus (\gamma^{\vee}|_{J'_{{\Z}}}) \oplus {\rm id}_{\Lambda_{2}}, 
\end{equation*}
whose image is contained in the stabilizer of $J$. 
Since this maps $\Gamma(N')$ into ${\rm O}^{+}(\Lambda', N')\subset {\G}$, 
we see that $\Gamma_{J}$ contains $\Gamma(N')$. 
\end{proof}

Let $q_{1}, q_{2}$ be the cusps of $X_{J}$ corresponding to $I_{1}, I_{2}$ respectively. 
By this claim we can apply the Manin-Drinfeld theorem to $X_{J}$. 
Therefore $[q_{1}]=[q_{2}]$ in $CH_{0}(X_{J})_{{\Q}}$. 
Since $f(q_{1})=p_{1}$ and $f(q_{2})=p_{2}$, we obtain 
\begin{equation*}
[p_{1}]=f_{\ast}[q_{1}]=f_{\ast}[q_{2}]=[p_{2}] 
\end{equation*}
in $CH_{0}({\X})_{{\Q}}$. 
%This proves Theorem \ref{thm: orthogonal} for $0$-dimensional cusps with $(I_{1}, I_{2})\equiv 0$. 

\subsubsection{The case $(I_{1}, I_{2})\not\equiv 0$}

Next we assume that $(I_{1}, I_{2})\not\equiv 0$. 
In this case $I_{1}\oplus I_{2}$ is isometric to $U_{{\Q}}$. 
Its orthogonal complement $(I_{1}\oplus I_{2})^{\perp}$ has signature $(1, n-1)$. 
We choose a vector $v$ of positive norm from $(I_{1}\oplus I_{2})^{\perp}$ 
and put $\Lambda'_{{\Q}}=I_{1}\oplus I_{2}\oplus {\Q}v$. 
%and $\Lambda''_{{\Q}}=(\Lambda'_{{\Q}})^{\perp}$. 
Then $\Lambda'_{{\Q}}$ has signature $(2, 1)$. %and $\Lambda''_{{\Q}}$ is negative-definite, 
%and we have $\Lambda_{{\Q}}=\Lambda'_{{\Q}}\perp \Lambda''_{{\Q}}$. 
Let $\mathcal{D}_{\Lambda'}$ be the Hermitian symmetric domain attached to $\Lambda'_{{\Q}}$. 
We have the natural inclusion $\mathcal{D}_{\Lambda'}^{\ast}\subset {\DD}$ 
which is compatible with the embedding of orthogonal groups 
\begin{equation*}
\iota : {\rm O}^{+}(\Lambda'_{{\Q}}) \hookrightarrow {\rm O}^{+}({\LQ}), \qquad 
\gamma \mapsto \gamma \oplus {\rm id}_{(\Lambda'_{{\Q}})^{\perp}}. 
\end{equation*}

\begin{claim}
There is a subgroup $\Gamma' < {\rm O}^{+}(\Lambda'_{{\Q}})$ such that 
$\iota(\Gamma')\subset {\G}$ and that 
$X'=\Gamma'\backslash \mathcal{D}_{\Lambda'}^{\ast}$ is 
naturally isomorphic to $X(N)$ for some level $N$. 
\end{claim}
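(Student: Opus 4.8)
The plan is to exploit the exceptional isomorphism between ${\rm SL}_{2}$ and the orthogonal group of a ternary quadratic space of signature $(2,1)$, and then to repeat the level comparison carried out in the proof of Claim~\ref{claim1}. First I would fix an explicit model for $\Lambda'_{{\Q}}$. Let $W$ be the space of trace-zero matrices in $M_{2}({\Q})$, equipped with the quadratic form $q(A)=-\det(A)$. A direct computation shows that $W$ has signature $(2,1)$, with $\begin{pmatrix} 0 & 1 \\ 0 & 0 \end{pmatrix}$ and $\begin{pmatrix} 0 & 0 \\ 1 & 0 \end{pmatrix}$ spanning a hyperbolic plane and $\begin{pmatrix} 1 & 0 \\ 0 & -1 \end{pmatrix}$ a positive vector orthogonal to it; hence there is an isometry $\Lambda'_{{\Q}}\cong W$ carrying $I_{1}, I_{2}$ to the two isotropic lines and $v$ to a multiple of the last matrix. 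The group ${\rm SL}_{2}({\Q})$ acts on $W$ by conjugation, preserving $\det$, and this yields a homomorphism $\rho:{\rm SL}_{2}({\Q})\to {\rm SO}(\Lambda'_{{\Q}})$ with kernel $\{\pm I\}$ and image the identity component ${\rm SO}^{+}(\Lambda'_{{\Q}})\subset {\rm O}^{+}(\Lambda'_{{\Q}})$.

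Next I would identify the domains. Since $\rho({\rm SL}_{2}({\R}))$ is connected it preserves $\mathcal{D}_{\Lambda'}$, and a direct check shows that $\rho$ intertwines the action of ${\rm SL}_{2}({\R})$ on $\mathcal{D}_{\Lambda'}$ with the usual M\"{o}bius action on the upper half plane ${\HH}$, while the isotropic lines of $W$, i.e.\ the boundary points $p_{I}$, correspond to the cusps ${\Q}\cup\{\infty\}$. This gives a $\rho$-equivariant identification $\mathcal{D}_{\Lambda'}^{\ast}\cong {\HH}^{\ast}$.

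Then I would run the level argument. With $W_{{\Z}}$ the lattice of trace-zero integral matrices, conjugation restricts to $\rho:{\SL}\to {\rm O}^{+}(W_{{\Z}})$, and for $\gamma\in\Gamma(N')$ one has $\gamma A\gamma^{-1}\equiv A\pmod{N'}$ for all $A\in W_{{\Z}}$, so $\rho(\gamma)\in {\rm O}^{+}(W_{{\Z}}, N')$. Transporting $W_{{\Z}}$ through the isometry above gives a lattice $\Lambda'_{{\Z}}\subset\Lambda'_{{\Q}}$; extending it by any lattice in the negative definite complement $(\Lambda'_{{\Q}})^{\perp}$ produces a full lattice $\Lambda''$ in ${\LQ}$, and comparing the two full lattices $\Lambda$ and $\Lambda''$ exactly as in \eqref{eqn: level argument} yields a level $N''$ with ${\rm O}^{+}(\Lambda'', N'')\subset {\rm O}^{+}(\Lambda, N)\subset {\G}$. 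Because $\iota$ extends isometries by the identity on $(\Lambda'_{{\Q}})^{\perp}$, the composite $\iota\circ\rho$ carries $\Gamma(N')$ into ${\rm O}^{+}(\Lambda'', N')$, hence into ${\rm O}^{+}(\Lambda'', N'')\subset {\G}$, as soon as $N'$ is a multiple of $N''$.

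Finally I would take $N'$ to be a multiple of $N''$ with $N'\geq 3$ and set $\Gamma'=\rho(\Gamma(N'))$. Then $\iota(\Gamma')\subset {\G}$ by the previous step, and since $-I\notin\Gamma(N')$ for $N'\geq 3$ the map $\rho$ is injective on $\Gamma(N')$; combined with the equivariant identification $\mathcal{D}_{\Lambda'}^{\ast}\cong{\HH}^{\ast}$ this gives $X'=\Gamma'\backslash\mathcal{D}_{\Lambda'}^{\ast}\cong\Gamma(N')\backslash{\HH}^{\ast}=X(N')$. I expect the main obstacle to be the second step: pinning down that the intrinsically defined domain $\mathcal{D}_{\Lambda'}$ and its rational boundary $\mathcal{D}_{\Lambda'}^{\ast}$ really are carried by $\rho$ onto ${\HH}^{\ast}$ with the standard modular action and the standard cusps. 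Once this compatibility is established, the remaining level bookkeeping is a direct repetition of Claim~\ref{claim1}.
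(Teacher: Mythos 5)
Your route is the same as the paper's: realize the ternary space by trace-zero $2\times 2$ matrices, let ${\rm SL}_{2}({\Q})$ act by conjugation, check that $\Gamma(N')$ lands in the level-$N'$ congruence subgroup of the orthogonal group of the integral trace-zero lattice, and rerun the lattice-comparison argument \eqref{eqn: level argument}. The one step that fails as stated is the very first one: in general there is \emph{no} isometry $\Lambda'_{{\Q}}\cong W$. Equality of signatures does not imply isometry over ${\Q}$; the discriminant is an obstruction. Indeed $\Lambda'_{{\Q}}\cong U_{{\Q}}\perp\langle (v,v)\rangle$ has discriminant $-(v,v)$ modulo squares, whereas $W$ with $q=-\det$ has discriminant $-1$ modulo squares, so an isometry --- in particular one carrying $v$ to a multiple of $\begin{pmatrix}1&0\\0&-1\end{pmatrix}$, which would force $(v,v)=\lambda^{2}q\begin{pmatrix}1&0\\0&-1\end{pmatrix}=\lambda^{2}$ --- exists only when $(v,v)$ is a rational square. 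This can fail for \emph{every} admissible choice of $v$: take $\Lambda_{{\Q}}=U_{{\Q}}\perp\langle 3\rangle\perp\langle -1\rangle$ of signature $(2,2)$, with $I_{1},I_{2}$ the two standard isotropic lines of $U_{{\Q}}$; then $(I_{1}\oplus I_{2})^{\perp}\cong\langle 3\rangle\perp\langle -1\rangle$, and $3x^{2}-y^{2}$ represents no nonzero rational square (descent at $3$: $3x^{2}=w^{2}+y^{2}$ forces $3\mid w,y$, then $3\mid x$).

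The repair is a one-liner, and it is exactly how the paper phrases this step: $\Lambda'_{{\Q}}$ is isometric to $W$ \emph{after rescaling the quadratic form by a positive rational number} (rescale by $(v,v)$, using $cU_{{\Q}}\cong U_{{\Q}}$ for $c>0$). Since a positive rescaling changes neither ${\rm O}^{+}$ nor the domain $\mathcal{D}_{\Lambda'}^{\ast}$ nor the notion of congruence subgroup attached to a lattice, all your subsequent steps --- transporting the lattice $W_{{\Z}}$, the computation $\gamma A\gamma^{-1}\equiv A \pmod{N'}$, the inclusion ${\rm O}^{+}(\Lambda'',N'')\subset{\G}$ via \eqref{eqn: level argument}, and the identification $\Gamma'\backslash\mathcal{D}_{\Lambda'}^{\ast}\cong X(N')$ --- go through verbatim with ``isometry'' replaced by ``similitude''. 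Two further remarks. Your assertion that ${\rm Im}(\rho)={\rm SO}^{+}(\Lambda'_{{\Q}})$ is never used, which is fortunate because over ${\Q}$ it is false: the image is the spinor kernel, a proper subgroup (conjugation by $\mathrm{diag}(2,1)$ lies in ${\rm SO}^{+}$ but not in the image, since $2\lambda^{2}=1$ has no rational solution); the paper makes the same parenthetical claim and explicitly sets it aside as unneeded. Finally, the $\rho$-equivariant identification $\mathcal{D}_{\Lambda'}^{\ast}\cong{\HH}^{\ast}$ that you flag as the main remaining obstacle is precisely what the paper pins down with the explicit Veronese map $\tau\mapsto e+\tau v_{0}-\tau^{2}f$; in your matrix model this is $\tau\mapsto\bigl[\begin{smallmatrix}\tau&-\tau^{2}\\ 1&-\tau\end{smallmatrix}\bigr]$, whose equivariance under conjugation is a direct computation, so that step is genuinely routine once written out.
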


\begin{proof}
Let $\Lambda_{1}=U\perp \langle 2 \rangle$. 
Then $\Lambda_{{\Q}}'$ is isometric to the scaling  
of $(\Lambda_{1})_{{\Q}}$ by some positive rational number. 
This gives natural isomorphisms 
$\mathcal{D}_{\Lambda'}^{\ast} \simeq \mathcal{D}_{\Lambda_{1}}^{\ast}$ and 
${\rm O}^{+}(\Lambda_{{\Q}}') \simeq {\rm O}^{+}((\Lambda_{1})_{{\Q}})$. 
The group ${\rm O}^{+}((\Lambda_{1})_{{\Q}})$ is related to ${\rm SL}_{2}({\Q})$ 
by the following well-known construction (cf.~\cite{M-R} \S 2.4). 
Let $V\subset M_{2}({\Q})$ be the space of $2\times 2$ matrices with trace $0$, 
equipped with the symmetric form $(A, B)={\tr}(AB)$. 
Then $V\cap M_{2}({\Z})$ is isometric to $\Lambda_{1}$. 
By conjugation ${\rm SL}_{2}({\Q})$ acts on $V$. 
This defines a homomorphism 
\begin{equation*}
\varphi : {\rm SL}_{2}({\Q}) \to {\rm O}^{+}(V)={\rm O}^{+}((\Lambda_{1})_{{\Q}}) 
\end{equation*}
with ${\ker}(\varphi)=\{ \pm I \}$. 
(We have ${\rm Im}(\varphi)={\rm SO}^{+}(V)$, 
but we do not need this fact.) 
It is readily checked that 
$\varphi(\Gamma(N))\subset {\rm O}^{+}(\Lambda_{1}, N)$ 
for every level $N$. 
Furthermore, $\varphi$ is compatible with the Veronese isomorphism 
\begin{equation*}
{\HH}^{\ast} \to \mathcal{D}_{\Lambda_{1}}^{\ast}, \qquad 
\tau \mapsto e + \tau v_{0} -\tau^{2}f, 
\end{equation*}
where $e, f$ are the standard basis of $U$ and $v_{0}$ is a generator of $\langle 2 \rangle$. 
Now by the same argument as \eqref{eqn: level argument}, 
there exists a level $N$ such that the embedding $\iota$ maps 
${\rm O}^{+}(\Lambda_{1}, N)$ into ${\G}$. 
This proves our claim. 
\end{proof}

Let $q_{1}, q_{2}$ be the cusps of $X'$ corresponding to 
the isotropic lines $I_{1}, I_{2}$ of $\Lambda'_{{\Q}}$. 
By this claim we have a finite morphism 
$f:X'\to {\X}$ 
which sends $q_{1}$ to $p_{1}$ and $q_{2}$ to $p_{2}$. 
By the Manin-Drinfeld theorem for $X'$ we have 
$[q_{1}]=[q_{2}]$ in $CH_{0}(X')_{{\Q}}$. 
Applying $f_{\ast}$, 
we obtain $[p_{1}]=[p_{2}]$ in $CH_{0}({\X})_{{\Q}}$. 
This finishes the proof of Theorem \ref{thm: orthogonal} for $0$-dimensional cusps.

\begin{remark}\label{remark1}
If $\Lambda$ has Witt index $2$, 
$(I_{1}\oplus I_{2})^{\perp}$ contains an isotropic line, say $I_{3}$. 
Then we could also apply the result of \S \ref{sssec: 0dim cusp (I_{1}, I_{2})=0} 
to $I_{1}$ vs $I_{3}$ and to $I_{3}$ vs $I_{2}$, 
thus obtaining $[p_{1}]=[p_{2}]$ via $I_{3}$. 
%This gives a simpler proof. 
Together with the case of \S \ref{sssec: 0dim cusp (I_{1}, I_{2})=0}, 
this shows that  when $X_{\Gamma}$ contains at least one $1$-dimensional cusp, 
then any two $0$-dimensional cusps can be connected by a chain of $1$-dimensional cusps 
of length $\leq 2$ 
which provides their rational equivalence. 
\end{remark}

\subsection{$1$-dimensional cusps}\label{ssec: orthogonal 1dim cusp}

In this subsection we prove Theorem \ref{thm: orthogonal} for $1$-dimensional cusps.

\subsubsection{Preliminaries in $n=2$}\label{sssec: n=2}

Although the case $n=2$ is not included in Theorem \ref{thm: orthogonal} for $1$-dimensional cusps, 
we need to study a specific example in $n=2$  
as preliminaries for the proof for the case $n\geq3$. 
We consider the lattice $2U=U\perp U$. 
Let $e_{1}, f_{1}$ be the standard basis of the first copy of $U$, 
and $e_{2}, f_{2}$ be that of the second $U$. 
Let $J_{1}'={\Q}e_{2}\oplus{\Q}e_{1}$ and $J_{2}'={\Q}f_{2}\oplus{\Q}f_{1}$, 
which are isotropic planes in $2U_{{\Q}}$. 
We take an arbitrary natural number $N$ and consider the modular surface 
$S(N)={\rm O}^{+}(2U, N)\backslash \mathcal{D}_{2U}^{\ast}$. 
Let $C_{1}, C_{2}$ be the boundary curves of $S(N)$ 
associated to $J_{1}', J_{2}'$ respectively. 

\begin{lemma}\label{lem: n=2}
We have ${\Q}[C_{1}]={\Q}[C_{2}]$ in $CH_{1}(S(N))_{{\Q}}$. 
\end{lemma}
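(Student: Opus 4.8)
The plan is to use the exceptional isomorphism in signature $(2,2)$ between the orthogonal group of $2U$ and an isogeny quotient of $\SLQ \times \SLQ$, and then to show that $C_{1}$ and $C_{2}$ become fibres over two cusps of the \emph{same} modular curve factor. Concretely, I would realize $2U_{\Q}$ as $\MQ$ equipped with the quadratic form $q(A)=\det(A)$, whose associated bilinear form $(A,B)=\tr(A)\tr(B)-\tr(AB)$ has signature $(2,2)$. The action of $\SLQ\times\SLQ$ on $\MQ$ by $(g,h)\cdot A=gAh^{-1}$ preserves $\det$, giving an isogeny onto ${\rm SO}^{+}$ and a biholomorphism $\mathcal{D}_{2U}\simeq\HH\times\HH$ (cf.~the construction used in the second claim of \S\ref{ssec: orthogonal 0dim cusp}). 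Under this description the isotropic (rank one) matrices are the products $v\,w^{\top}$, so isotropic lines are parametrized by $\proj^{1}\times\proj^{1}$, and the isotropic planes fall into two rulings: those with fixed column space, $\{A : \im A\subset \Q v\}$, and those with fixed row space.

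Next I would fix an explicit isometry $2U_{\Q}\to\MQ$, for instance the one sending $e_{1},f_{1},e_{2},f_{2}$ to the matrix units $E_{11},E_{22},E_{12},-E_{21}$, which one checks carries $q(x_{1}e_{1}+y_{1}f_{1}+x_{2}e_{2}+y_{2}f_{2})=x_{1}y_{1}+x_{2}y_{2}$ to $\det$, and then compute the images of $J_{1}'$ and $J_{2}'$. The decisive point is that $J_{1}'=\Q e_{2}\oplus\Q e_{1}$ maps to the matrices with vanishing second row and $J_{2}'=\Q f_{2}\oplus\Q f_{1}$ to those with vanishing first row; \emph{both} have fixed column space, hence both lie in the \textbf{same} ruling, corresponding to the two distinct points $[(1,0)^{\top}]$ and $[(0,1)^{\top}]$ on the first $\proj^{1}$-factor. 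Thus $C_{1}$ and $C_{2}$ are boundary curves of the form $\{q_{i}\}\times X$ lying over cusps $q_{1}\ne q_{2}$ of one and the same modular curve, and not transversal curves drawn from opposite rulings.

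Finally, by the level argument of \eqref{eqn: level argument} applied to the isogeny $\SLQ\times\SLQ\to{\rm SO}^{+}(2U_{\Q})$, I would produce a level $N'$ with $\Gamma(N')\times\Gamma(N')$ mapping into ${\rm O}^{+}(2U,N)$, which yields a finite morphism $F:X(N')\times X(N')\to S(N)$ from the product of compactified modular curves (this product being the Baily-Borel compactification of the corresponding product modular surface) sending the fibre boundary curves $\{q_{1}'\}\times X(N')$ and $\{q_{2}'\}\times X(N')$ onto $C_{1}$ and $C_{2}$. On the product, the Manin-Drinfeld theorem for the first factor supplies a modular unit $g$ on $X(N')$ with $\mathrm{div}(g)$ a $\Q$-multiple of $q_{1}'-q_{2}'$; pulling $g$ back along the first projection shows $\Q[\{q_{1}'\}\times X(N')]=\Q[\{q_{2}'\}\times X(N')]$ in $CH_{1}$, and applying $F_{\ast}$ gives $\Q[C_{1}]=\Q[C_{2}]$.

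The main obstacle, and indeed the whole content of the lemma, is the computation in the second paragraph: verifying that $J_{1}'$ and $J_{2}'$ belong to the same ruling. As noted in the introduction, two boundary curves coming from opposite rulings are not even homologically equivalent, so the statement would be \emph{false} for a careless choice of planes; it is precisely the co-ruling of $\Q e_{1}\oplus\Q e_{2}$ and $\Q f_{1}\oplus\Q f_{2}$ that must be pinned down. The remaining steps — the signature-$(2,2)$ isomorphism, the level argument, and the identification of the product compactification with $S(N)$ — are routine and parallel to the arguments already given for $0$-dimensional cusps.
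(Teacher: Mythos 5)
Your proposal is correct and takes essentially the same approach as the paper: reduce $S(N)$ to the finite image of a product of modular curves via the signature-$(2,2)$ exceptional isomorphism, verify that $C_{1}$ and $C_{2}$ lie in the \emph{same} ruling (fibres of one projection over two distinct cusps, rather than transversal curves from opposite rulings), and conclude by Manin-Drinfeld on that factor followed by pushforward. The only difference is the model used for the isomorphism — the paper works with the explicit Segre map $(\tau_{1},\tau_{2})\mapsto e_{1}-\tau_{1}\tau_{2}f_{1}+\tau_{1}e_{2}+\tau_{2}f_{2}$ and the embedding ${\rm SL}(J_{1}')\times{\rm SL}(J_{3}')\hookrightarrow {\rm O}^{+}(2U_{\Q})$ built from the dual pairs $(J_{1}',J_{2}')$ and $(J_{3}',J_{4}')$, while you use the matrix model $(M_{2}({\Q}),\det)$ with two-sided action — but this is a cosmetic change of coordinates, not a different argument.
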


\begin{proof}
Recall that we have the Segre isomorphism 
\begin{equation}\label{eqn: Segre}
{\HH}\times {\HH} \to \mathcal{D}_{2U}, \qquad 
(\tau_{1}, \tau_{2})\mapsto e_{1}-\tau_{1}\tau_{2}f_{1}+\tau_{1}e_{2}+\tau_{2}f_{2}. 
\end{equation}
This extends to 
${\HH}^{\ast}\times {\HH}^{\ast} \to \mathcal{D}_{2U}^{\ast}$, 
and maps the boundary components 
${\HH}\times (\tau_{2}=0), {\HH}\times (\tau_{2}=i\infty)$ 
of ${\HH}^{\ast}\times {\HH}^{\ast}$ 
to the boundary components 
${\HH}_{J_{1}'}, {\HH}_{J_{2}'}$  of $\mathcal{D}_{2U}^{\ast}$ respectively. 

Let $J_{3}'={\Q}f_{2}\oplus{\Q}e_{1}$ and $J_{4}'={\Q}e_{2}\oplus{\Q}f_{1}$. 
By the pairing we identify $J_{2}'\simeq (J_{1}')^{\vee}$ and $J_{4}'\simeq (J_{3}')^{\vee}$. 
Then we define an embedding 
\begin{equation*}
{\rm SL}_{2}({\Q})\times {\rm SL}_{2}({\Q}) =  
{\rm SL}(J_{1}')\times {\rm SL}(J_{3}') \hookrightarrow 
{\rm O}^{+}(2U_{{\Q}}) 
\end{equation*}
by sending 
$\gamma_{1}\in {\rm SL}(J_{1}')$ to 
$(\gamma_{1}|_{J_{1}'})\oplus (\gamma_{1}^{\vee}|_{J_{2}'})$ 
and $\gamma_{3}\in {\rm SL}(J_{3}')$ to 
$(\gamma_{3}|_{J_{3}'})\oplus (\gamma_{3}^{\vee}|_{J_{4}'})$. 
This embedding of groups is compatible with the isomorphism \eqref{eqn: Segre} of domains, 
and it maps $\Gamma(N)\times \Gamma(N)$ into ${\rm O}^{+}(2U, N)$. 
We thus obtain a finite morphism 
$f: X(N)\times X(N)\to S(N)$ which maps the boundary curves 
\begin{equation*}
C_{1}'=X(N)\times (\tau_{2}=0), \quad  
C_{2}'=X(N)\times (\tau_{2}=i\infty) 
\end{equation*}
of $X(N)\times X(N)$ onto $C_{1}, C_{2}$ respectively.  
By the Manin-Drinfeld theorem for the second copy of $X(N)$, 
we have $[C_{1}']=[C_{2}']$ in $CH_{1}(X(N)\times X(N))_{{\Q}}$. 
Applying $f_{\ast}$, we obtain the assertion. 
\end{proof}

\subsubsection{The case $J_{1}\cap J_{2}=\{ 0 \}$}\label{sssec: I_1capI2=0}

We go back to the proof of Theorem \ref{thm: orthogonal}. 
Let $\Lambda$ have signature $(2, n)$ with $n\geq 3$. 
Let $J_{1}\ne J_{2}$ be two isotropic planes in ${\LQ}$ 
and $Z_1, Z_2\subset {\X}$ the corresponding $1$-dimensional cusps.  
We first consider the case where $J_{1}\cap J_{2}= \{ 0\}$. 
In this case the pairing between $J_{1}$ and $J_{2}$ is perfect 
because $J_{i}^{\perp}/J_{i}$ is negative definite. 
The direct sum $\Lambda_{{\Q}}'=J_{1}\oplus J_{2}$ is isometric to $2U_{{\Q}}$. 
We can take an isometry $2U_{{\Q}}\to \Lambda_{{\Q}}'$ which maps 
$J_{1}', J_{2}'$ to $J_{1}, J_{2}$ respectively. 
This gives an embedding of orthogonal groups 
\begin{equation}\label{eqn: embed O(2U)}
{\rm O}^{+}(2U_{{\Q}}) \simeq {\rm O}^{+}(\Lambda_{{\Q}}') 
\hookrightarrow {\rm O}^{+}(\Lambda_{{\Q}}), \qquad 
\gamma \mapsto \gamma\oplus {\rm id}_{(\Lambda_{{\Q}}')^{\perp}}, 
\end{equation}
which is compatible with the embedding  
$\mathcal{D}_{2U} \simeq \mathcal{D}_{\Lambda'} \subset {\D}$ 
of domains. 
By the same argument as \eqref{eqn: level argument}, 
we can find a level $N$ such that the embedding \eqref{eqn: embed O(2U)} 
maps ${\rm O}^{+}(2U, N)$ into ${\G}$. 
We thus obtain a finite morphism 
$f:S(N)\to {\X}$ with $f(C_{1})=Z_{1}$ and $f(C_{2})=Z_{2}$. 
Sending the equality 
${\Q}[C_{1}]={\Q}[C_{2}]$ 
of Lemma \ref{lem: n=2} by $f_{\ast}$, 
we obtain ${\Q}[Z_{1}]={\Q}[Z_{2}]$ in $CH_{1}({\X})_{{\Q}}$.

\subsubsection{The case $J_{1}\cap J_{2}\ne\{ 0 \}$}\label{sssec: I_1capI2not0}

We next consider the case where $J_{1}\cap J_{2}\ne \{ 0 \}$. 
Let $I=J_{1}\cap J_{2}$ and choose splittings 
$J_{1}=I\oplus I_{1}$ and $J_{2}=I\oplus I_{2}$. 
Since $(I_{1}, I_{2})\not\equiv 0$, we have $I_{1}\oplus I_{2}\simeq U_{{\Q}}$. 
Let ${\LLQ}=I_{1}\oplus I_{2}$ and ${\LLLQ}=({\LLQ})^{\perp}$. 
Then ${\LLLQ}$ has signature $(1, n-1)$. 
Since $n-1\geq2$ and ${\LLLQ}$ contains at least one isotropic line $I$, 
then ${\LLLQ}$ contains infinitely many isotropic lines. 
We can choose isotropic lines $I_{3}, I_{4}$ in ${\LLLQ}$ 
such that $I, I_{3}, I_{4}$ are linearly independent. 
Put 
$J_{3}=I_{4}\oplus I_{2}$ and $J_{4}=I_{3}\oplus I_{1}$. 
Then $J_{3}, J_{4}$ are isotropic of dimension $2$ 
and we have 
\begin{equation*}
J_{1}\cap J_{3}= \{ 0 \}, \quad 
J_{3}\cap J_{4}= \{ 0 \}, \quad 
J_{4}\cap J_{2}= \{ 0 \}. 
\end{equation*}
If $Z_{i}\subset {\X}$ is the $1$-dimensional cusp associated to $J_{i}$, 
we can apply the result of \S \ref{sssec: I_1capI2=0} successively 
and obtain 
\begin{equation*}
{\Q}[Z_{1}] = {\Q}[Z_{3}] = {\Q}[Z_{4}] = {\Q}[Z_{2}] 
\end{equation*}
in $CH_{1}({\X})_{{\Q}}$. 
This finishes the proof of Theorem \ref{thm: orthogonal} for $1$-dimensional cusps.

%%%Symplectic

\section{The symplectic case}\label{sec: symplectic}

In this section we prove Theorem \ref{thm: symplectic}. 
We first recall Siegel modular varieties (cf.~\cite{H-K-W}, \cite{Lo}). 
Let $\Lambda$ be a free ${\Z}$-module of rank $2g$ equipped with 
a nondegenerate symplectic form 
$( \: , \: ):\Lambda \times \Lambda \to {\Z}$. 
Let ${\SpL}$ be the symplectic group of $\Lambda$, namely 
the group of isomorphisms $\Lambda \to \Lambda$ preserving the symplectic form. 
For a natural number $N$ we write ${\SpLN}$ for the kernel of the reduction map 
${\SpL}\to {\rm GL}(\Lambda/N\Lambda)$. 
A subgroup ${\G}$ of ${\SpL}$ is called a congruence subgroup 
if it contains ${\SpLN}$ for some level $N$. 
When $g\geq2$, every finite-index subgroup of ${\SpL}$ 
is a congruence subgroup (\cite{Me}, \cite{B-L-S}). 

Let 
\begin{equation*}
LG_{\Lambda}=\{ [V] \in G(g, {\LC}) \: | \: ( \: , \:)|_{V}\equiv 0 \} 
\end{equation*}
be the Lagrangian Grassmannian parametrizing 
$g$-dimensional (= maximal) isotropic ${\C}$-subspaces of ${\LC}$. 
The Hermitian symmetric domain attached to $\Lambda$ is defined as 
the open locus ${\D}\subset LG_{\Lambda}$ of those $[V]$ 
such that the Hermitian form $i(\cdot, \bar{\cdot})|_{V}$ on $V$ is positive definite. 

Rational boundary components of ${\D}$ correspond to 
isotropic ${\Q}$-subspaces $I$ of ${\LQ}$. 
To each such $I$ we associate the locus 
$\mathcal{D}_{I}\subset LG_{\Lambda}$ 
of those $[V]$ which contains $I$ 
and for which $i(\cdot, \bar{\cdot})|_{V}$ is positive semidefinite with kernel $I_{{\C}}$. 
If we consider the rational symplectic space 
${\LLQ}=I^{\perp}/I$, 
then $\mathcal{D}_{I}$ is canonically isomorphic to 
the Hermitian symmetric domain $\mathcal{D}_{\Lambda'}$ attached to ${\LLQ}$ 
by mapping $[V]\in\mathcal{D}_{I}$ to $[V/I_{{\C}}]\in \mathcal{D}_{\Lambda'}$. 
The union 
\begin{equation*}
{\DD} = {\D} \sqcup \bigsqcup_{I\subset {\LQ}} \mathcal{D}_{I}  
\end{equation*}
is equipped with the Satake topology (\cite{B-B}, \cite{B-J}, \cite{H-K-W}). 
%Note that if $I\subset I'$, then $\mathcal{D}_{I'}$ is in the closure of $\mathcal{D}_{I}$. 
By Baily-Borel \cite{B-B}, the quotient space 
${\X}={\G}\backslash{\DD}$ 
has the structure of a normal projective variety 
and contains ${\G}\backslash{\D}$ as a Zariski open set. 

Theorem \ref{thm: symplectic} is proved by induction on $g$. 
The case $g=1$ follows from the Manin-Drinfeld theorem. 
Let $g\geq 2$. 
Assume that the theorem is proved for every congruence subgroup 
of ${\rm Sp}(\Lambda')$ for every symplectic lattice $\Lambda'$ of rank $<2g$. 
We then prove the theorem for ${\G}<{\SpL}$ with $\Lambda$ rank $2g$. 
 
Let $I_{1}\ne I_{2}$ be two isotropic ${\Q}$-subspaces of ${\LQ}$ of the same dimension, say $g'$,  
and $Z_{1}, Z_{2}\subset {\X}$ the corresponding cusps. 
If we write $g''=g-g'$, then $Z_{i}$ has dimension $k=g''(g''+1)/2$. 
We consider the following three cases separately: 
\begin{enumerate}
\item $I_{1}\cap I_{2}\ne \{ 0 \}$;  
\item the pairing between $I_{1}$ and $I_{2}$ is perfect;  
\item $I_{1}\cap I_{2}= \{ 0 \}$ but the pairing between $I_{1}$ and $I_{2}$ is not perfect. 
\end{enumerate} 
The case (1) is studied in \S \ref{ssec: symplectic case 1} 
where $Z_{1}$ and $Z_{2}$ are joined by a modular variety in the boundary. 
The case (2) is studied in \S \ref{ssec: symplectic case 2} 
where $Z_{1}$ and $Z_{2}$ are joined by a product of two modular varieties (when $g'=1$)  
or by a chain of boundary modular varieties (when $g'>1$). 
The remaining case (3) is considered in \S \ref{ssec: symplectic case 3} 
where we combine the results of (1) and (2).

\subsection{The case $I_{1}\cap I_{2}\ne \{ 0\}$}\label{ssec: symplectic case 1}

Assume that $I_{1}\cap I_{2}\ne \{ 0\}$. 
Let $I=I_{1}\cap I_{2}$. 
In this case $\mathcal{D}_{I_{1}}, \mathcal{D}_{I_{2}}$ are in the boundary of $\mathcal{D}_{I}$. 
We set ${\LLQ}=I^{\perp}/I$, $I_{1}'=I_{1}/I$ and $I_{2}'=I_{2}/I$. 
Then $I_{1}', I_{2}'$ are isotropic subspaces of ${\LLQ}$.  
The isomorphism $\mathcal{D}_{I}\to \mathcal{D}_{\Lambda'}$ 
extends to $\mathcal{D}_{I}^{\ast}\to \mathcal{D}_{\Lambda'}^{\ast}$ 
and maps $\mathcal{D}_{I_{i}}$ to $\mathcal{D}_{I_{i}'}$. 
The stabilizer of $I$ in ${\G}$ acts on ${\LLQ}$ naturally. 
Let $\Gamma_{I}<{\SpLLQ}$ be its image in ${\SpLLQ}$. 
By a similar argument as Claim \ref{claim1}, 
$\Gamma_{I}$ is a congruence subgroup of ${\rm Sp}(\Lambda')$ 
for some lattice $\Lambda'\subset {\LLQ}$. 
If we put $X_{I}=\Gamma_{I}\backslash \mathcal{D}_{\Lambda'}^{\ast}$, 
we have a generically injective morphism 
$f:X_{I} \to {\X}$ 
onto the $I$-cusp. 

Let $Z_{1}', Z_{2}'\subset X_{I}$ be the cusps of $X_{I}$ 
corresponding to $I_{1}', I_{2}'\subset {\LLQ}$ respectively. 
By our hypothesis of induction, we have 
${\Q}[Z_{1}']={\Q}[Z_{2}']$ in $CH_{k}(X_{I})_{{\Q}}$. 
Since $f(Z_{i}')=Z_{i}$, 
applying $f_{\ast}$ gives 
${\Q}[Z_{1}]={\Q}[Z_{2}]$ in $CH_{k}({\X})_{{\Q}}$. 
%This proves Theorem \ref{thm: symplectic} in the case $I_{1}\cap I_{2}\ne \{ 0 \}$. 

\subsection{The case $(I_{1}, I_{2})$ perfect}\label{ssec: symplectic case 2}

Next we consider the case where the pairing between $I_{1}$ and $I_{2}$ is perfect. 
We shall distinguish the case $g'>1$ and the case $g'=1$ (i.e., top dimensional cusps).

\subsubsection{The case $g'>1$}\label{sssec: symplectic perfect k=0}

First let $g'>1$. 
We can choose a proper subspace $J_{1}\ne \{ 0\}$ of $I_{1}$. 
We put 
$J_{2}=J_{1}^{\perp}\cap I_{2}$ and $I_{3}=J_{1}\oplus J_{2}$. 
Then $I_{3}$ is isotropic of dimension $g'$. 
By construction we have 
$I_{1}\cap I_{3}\ne \{ 0 \}$ and $I_{3}\cap I_{2}\ne \{ 0 \}$. 
Therefore we can apply the result of \S \ref{ssec: symplectic case 1} 
to $I_{1}$ vs $I_{3}$ and to $I_{3}$ vs $I_{2}$. 
If $Z_{3}$ is the cusp of ${\X}$ associated to $I_{3}$, 
this gives 
${\Q}[Z_{1}]={\Q}[Z_{3}]={\Q}[Z_{2}]$ 
in $CH_{k}({\X})_{{\Q}}$.

\subsubsection{The case $g'=1$}\label{sssec: symplectic perfect k>0}

Next let $g'=1$. 
We set ${\LLQ}=I_{1}\oplus I_{2}$, 
which is a nondegenerate symplectic space of dimension $2$. 
Then ${\LLLQ}:=({\LLQ})^{\perp}$ is also nondegenerate of dimension $2g-2$ 
and we have ${\LQ}={\LLQ}\perp{\LLLQ}$. 
Let $\mathcal{D}_{\Lambda'}$, $\mathcal{D}_{\Lambda''}$ 
be the Hermitian symmetric domains attached to ${\LLQ}, {\LLLQ}$ respectively. 
We have the embedding of domains 
\begin{equation}\label{eqn: symplectic case 2 domain embed}
\mathcal{D}_{\Lambda'} \times \mathcal{D}_{\Lambda''} 
\hookrightarrow {\D}, \qquad 
(V', V'') \mapsto V'\oplus V''. 
\end{equation}
This is compatible with the embedding of groups 
\begin{equation}\label{eqn: symplectic case 2 group embed}
{\SpLLQ} \times {\SpLLLQ} 
\hookrightarrow {\SpLQ}, \qquad 
(\gamma', \gamma'') \mapsto \gamma'\oplus \gamma''. 
\end{equation}
The isotropic lines $I_{1}, I_{2}$ in ${\LLQ}$ 
correspond to the rational boundary points 
$[(I_{1})_{{\C}}], [(I_{2})_{{\C}}]$ of $\mathcal{D}_{\Lambda'}\simeq {\HH}$. 
Then \eqref{eqn: symplectic case 2 domain embed} extends to 
$\mathcal{D}_{\Lambda'}^{\ast} \times \mathcal{D}_{\Lambda''}^{\ast} \hookrightarrow {\DD}$ 
and maps $[(I_{i})_{{\C}}]\times \mathcal{D}_{\Lambda''}$ to $\mathcal{D}_{I_{i}}$. 

We take some full lattices 
$\Lambda'\subset {\LLQ}$ and $\Lambda''\subset {\LLLQ}$. 
By the same argument as \eqref{eqn: level argument}, 
we can find a level $N$ such that 
\eqref{eqn: symplectic case 2 group embed} maps 
${\rm Sp}(\Lambda', N)\times {\rm Sp}(\Lambda'', N)$ 
into ${\G}$. 
If we put 
$X'={\rm Sp}(\Lambda', N)\backslash \mathcal{D}_{\Lambda'}^{\ast}$ and  
$X''={\rm Sp}(\Lambda'', N)\backslash \mathcal{D}_{\Lambda''}^{\ast}$, 
we thus obtain a finite morphism 
$f:X'\times X'' \to {\X}$. 
Let $p_{1}, p_{2}$ be the cusps of the modular curve $X'$ 
corresponding $I_{1}, I_{2}\subset {\LLQ}$ respectively. 
If we set  
\begin{equation*}
Z_{i}' = p_{i}\times X'' \subset X' \times X'', 
\end{equation*}
the above consideration shows that $f(Z_{i}')=Z_{i}$. 

We have 
$[p_{1}]=[p_{2}]$ in $CH_{0}(X')_{{\Q}}$ 
by the Manin-Drinfeld theorem. 
Taking pullback by $X'\times X''\to X'$, 
we obtain 
$[Z_{1}']=[Z_{2}']$ in $CH_{k}(X'\times X'')_{{\Q}}$. 
Then, taking pushforward by $f$, 
we obtain 
${\Q}[Z_{1}]={\Q}[Z_{2}]$ in $CH_{k}({\X})_{{\Q}}$.  
%Thus Theorem \ref{thm: symplectic} is proved in the case $(I_{1}, I_{2})$ is perfect.  

\subsection{The remaining case}\label{ssec: symplectic case 3}

Finally we consider the remaining case, 
namely $I_{1}\cap I_{2} = \{ 0 \}$ 
but the pairing between $I_{1}$ and $I_{2}$ is not perfect. 
Let $J_{1}\subset I_{1}$ and $J_{2}\subset I_{2}$ 
be the kernels of the pairing between $I_{1}$ and $I_{2}$. 
We choose splittings 
$I_{1}=J_{1}\oplus K_{1}$ and $I_{2}=J_{2}\oplus K_{2}$. 
Then $\dim J_{1}=\dim J_{2}$ and 
the pairing between $K_{1}$ and $K_{2}$ is perfect. 
(We may have $K_{i}=\{ 0 \}$. 
This is the case, e.g., when $g'=1$.) 
We set ${\LLQ}=K_{1}\oplus K_{2}$ and ${\LLLQ}=({\LLQ})^{\perp}$, 
which are nondegenerate subspaces of ${\LQ}$ with 
${\LQ}={\LLQ}\perp{\LLLQ}$. 
By definition $J_{1}$ and $J_{2}$ are isotropic subspaces of ${\LLLQ}$ 
with $J_{1}\cap J_{2} = \{ 0 \}$ and $(J_{1}, J_{2})\equiv 0$. 
We can take another isotropic subspace $J_{0}$ of ${\LLLQ}$ 
of the same dimension as $J_{1}, J_{2}$ such that 
the pairings $(J_{0}, J_{1})$ and $(J_{0}, J_{2})$ are perfect. 
We set 
$I_{3}=J_{0}\oplus K_{2}$ and $I_{4}=J_{0}\oplus K_{1}$. 
Then $I_{3}, I_{4}$ are isotropic subspaces of ${\LQ}$ 
of the same dimension as $I_{1}, I_{2}$. 
By construction 
the pairings $(I_{1}, I_{3})$ and $(I_{2}, I_{4})$ are perfect, 
and we have $I_{3}\cap I_{4}\ne \{ 0 \}$. 
Then we can apply 
the result of \S \ref{ssec: symplectic case 2} to $I_{1}$ vs $I_{3}$ and to $I_{2}$ vs $I_{4}$, 
and when $K_{i} \ne \{ 0 \}$ the result of \S \ref{ssec: symplectic case 1} to $I_{3}$ vs $I_{4}$. 
(When $K_{i}=\{ 0 \}$, so that $I_{3}=I_{4}$, 
the latter process is skipped.)  
If $Z_{3}, Z_{4}$ are the cusps of ${\X}$ associated to $I_{3}, I_{4}$ respectively, 
this shows that 
\begin{equation*}
{\Q}[Z_{1}] = {\Q}[Z_{3}] = {\Q}[Z_{4}] = {\Q}[Z_{2}] 
\end{equation*} 
in $CH_{k}({\X})_{{\Q}}$. 
This completes the proof of Theorem \ref{thm: symplectic}.

\begin{remark}\label{remark2}
Summing up the argument in the case $g'>1$, 
we see that if $Z_{1}$ and $Z_{2}$ are not top dimensional, 
we can obtain their rational equivalence through 
a chain of higher dimensional cusps of length $\leq 5$. 
\end{remark}

%%%Unitary

\section{The unitary case}\label{sec: unitary}

In this section we prove Theorem \ref{thm: unitary}. 
We first recall modular varieties of unitary type (cf.~\cite{Ho}, \cite{Lo}). 
Let $K={\Q}(\sqrt{-D})$ be an imaginary quadratic field with 
$R=\mathcal{O}_{K}$ its ring of integers 
(or more generally an order in $K$). 
By a Hermitian lattice over $R$ 
we mean a finitely generated torsion-free $R$-module $\Lambda$ 
equipped with a nondegenerate Hermitian form 
$( \: , \: ): \Lambda \times \Lambda \to R$. 
We denote  
${\LK}=\Lambda\otimes_{R}K$ 
and 
${\LC}=\Lambda\otimes_{R}{\C}$, 
which are Hermitian spaces over $K, {\C}$ respectively 
and in which $\Lambda$ is naturally embedded. 
We may assume without loss of generality 
that the signature $(p, q)$ of $\Lambda$ satisfies $p\leq q$. 

Let ${\UL}$ be the unitary group of $\Lambda$, 
namely the group of $R$-linear isomorphisms 
$\Lambda \to \Lambda$ preserving the Hermitian form. 
This is the same as 
$K$-linear isomorphisms 
${\LK}\to {\LK}$ preserving the lattice $\Lambda$ and the Hermitian form. 
We write ${\rm SU}(\Lambda)$ for the subgroup of ${\UL}$ of determinant $1$. 
For a natural number $N$ we write ${\ULN}$ for the kernel of the reduction map 
${\UL}\to {\rm GL}(\Lambda/N\Lambda)$. 
A subgroup ${\G}$ of ${\UL}$ is called a congruence subgroup 
if it contains ${\ULN}$ for some level $N$. 

Let $G_{\Lambda}=G(p, {\LC})$ be the Grassmannian parametrizing 
$p$-dimensional ${\C}$-linear subspaces of ${\LC}$. 
The Hermitian symmetric domain ${\D}$ attached to $\Lambda$ is 
defined as the open locus 
\begin{equation*}
{\D} = \{ [V] \in G_{\Lambda} \: | \: ( \: , \:)|_{V}>0 \} 
\end{equation*}
of subspaces $V$ to which restriction of the Hermitian form is positive definite. 
When $p=0$, this is one point; 
when $p=1$, this is a ball in ${\proj}{\LC}\simeq {\proj}^{q}$. 

Rational boundary components of ${\D}$ correspond to 
isotropic $K$-subspaces $I$ of ${\LK}$. 
For each such $I$ we associate the locus 
$\mathcal{D}_{I}\subset G_{\Lambda}$ of those $V$ 
which contains $I$ and for which 
$( \: , \:)|_{V}$ is positive semidefinite with kernel $I_{{\C}}$.  
If we consider ${\LLK}=I^{\perp}/I$, 
this is a nondegenerate $K$-Hermitian space of signature 
$(p-r, q-r)$ where $r=\dim _{K}I$, 
and $\mathcal{D}_{I}$ is naturally isomorphic to the Hermitian symmetric domain 
$\mathcal{D}_{\Lambda'}$ attached to ${\LLK}$ by sending $[V]\in \mathcal{D}_{I}$ to $[V/I_{{\C}}]$. 
The union 
\begin{equation*}
{\DD} = {\D} \sqcup \bigsqcup_{I\subset {\LK}} \mathcal{D}_{I}  
\end{equation*}
is equipped with the Satake topology (\cite{B-B}, \cite{B-J}). 
By Baily-Borel \cite{B-B}, the quotient space 
${\X}={\G}\backslash{\DD}$ 
has the structure of a normal projective variety 
and contains ${\G}\backslash{\D}$ as a Zariski open set. 

The proof of Theorem \ref{thm: unitary} proceeds by induction on $q$. 
The case $q=1$ is the Manin-Drinfeld theorem: 
we explain this in \S \ref{ssec: q=1 unitary}. 
The inductive argument is done in \S \ref{ssec: induction unitary}. 
Since this is similar to the symplectic case, we will be brief in \S \ref{ssec: induction unitary}.

\subsection{On the case $q=1$}\label{ssec: q=1 unitary}

Let $q=1$. 
Then $r=p=q=1$, 
so ${\LK}$ is the (unique) $K$-Hermitian space of signature $(1, 1)$ 
containing an isotropic vector, 
and ${\D}$ is the unit disc in ${\proj}{\LC}\simeq {\proj}^{1}$.  
The group ${\SULK}$ is naturally isomorphic to ${\SLQ}$, 
and $\Gamma\cap {\SUL}$ is mapped to a conjugate of 
a congruence subgroup of ${\SL}$ under this isomorphism. 
This is a classical fact, but since we could not find a suitable reference 
for the second assertion, 
we supplement below a self-contained account for the convenience of the reader. 
Theorem \ref{thm: unitary} in the case $q=1$ then follows from the Manin-Drinfeld theorem, 
because we have a natural finite morphism from 
$X_{\Gamma\cap{\SUL}}$ to ${\X}$. 

We embed $K={\Q}(\sqrt{-D})$ into the matrix algebra ${\MQ}$ 
by sending $\sqrt{-D}$ to 
$J_{D}= \begin{pmatrix} 0 & -D \\ 1 & 0 \end{pmatrix}$. 
Left multiplication by $J_{D}$ makes ${\MQ}$ 
a $2$-dimensional $K$-linear space. 
We have a $K$-Hermitian form on ${\MQ}$ defined by 
\begin{equation*}
(A, B) = {\tr}(AB^{\ast}) + \sqrt{-D}^{-1}{\tr}(J_{D}AB^{\ast}), \qquad 
A, B\in {\MQ}, 
\end{equation*}
where for 
$B= \begin{pmatrix} a & b \\ c & d \end{pmatrix}$ 
we write 
$B^{\ast}= \begin{pmatrix} d & -b \\ -c & a \end{pmatrix}$. 
We denote ${\LK}={\MQ}$ 
when we want to stress this $K$-Hermitian structure. 
Then ${\LK}$ has signature $(1, 1)$ and contains an isotropic vector, e.g., 
$\begin{pmatrix} 1 & 0 \\ 0 & 0 \end{pmatrix}$. 
Right multiplication by ${\SLQ}$ on ${\MQ}$ is $K$-linear 
and preserves this Hermitian form. 
This defines a homomorphism 
\begin{equation}\label{eqn: SU(1,1) SL(2)}
{\SLQ}\to {\SULK} 
\end{equation}
which in fact is an isomorphism (see e.g., \cite{Sh1} \S 2). 

Let $\Lambda \subset {\LK}$ be a full $R$-lattice. 
We shall show that for every level $N$ 
the image of ${\SULN}={\SULK}\cap {\ULN}$ by \eqref{eqn: SU(1,1) SL(2)} 
is conjugate to a congruence subgroup of ${\SL}$. 
Let 
\begin{equation*}
\mathcal{O}=\{ X\in {\MQ} \: | \: \Lambda X \subset \Lambda \}. 
\end{equation*}
This is an order in ${\MQ}$ (see \cite{M-R} \S 2.2). 
Then 
${\SUL}=\mathcal{O}^{1}$, 
where for any subset $\mathcal{S}$ of ${\MQ}$ we write 
$\mathcal{S}^{1}=\mathcal{S}\cap {\SLQ}$. 
Take a maximal order $\mathcal{O}_{max}$ of ${\MQ}$ containing $\mathcal{O}$. 
Since $\mathcal{O}$ is of finite index in $\mathcal{O}_{max}$, 
there exists a natural number $N_{0}$ such that 
$N_{0}\mathcal{O}_{max}\subset \mathcal{O}$. 
Therefore 
\begin{equation*}
I+NN_{0}\mathcal{O}_{max} \subset 
I+N\mathcal{O} \subset 
\mathcal{O} \subset 
\mathcal{O}_{max}. 
\end{equation*}
Since 
$(I+N\mathcal{O})^{1}\subset {\SULN}$, 
this implies that 
\begin{equation*}
(I+NN_{0}\mathcal{O}_{max})^{1} \subset 
{\SULN} \subset 
{\SUL} \subset 
\mathcal{O}_{max}^{1}. 
\end{equation*}
Since every maximal order of ${\MQ}$ is conjugate to $M_{2}({\Z})$, 
there exists $g\in {\rm GL}_{2}({\Q})$ such that 
\begin{equation*}
\Gamma(NN_{0}) \subset 
{\rm Ad}_{g}({\SULN}) \subset 
{\rm Ad}_{g}({\SUL}) \subset 
{\SL}. 
\end{equation*}
This proves our claim.

\subsection{Inductive step}\label{ssec: induction unitary}

Let $q\geq 2$. 
Suppose that Theorem \ref{thm: unitary} is proved for 
all Hermitian lattices of signature $(p', q')$ with $p'\leq q'<q$. 
We then prove the theorem for Hermitian lattices of signature $(p, q)$ with $p\leq q$. 
Since the argument is similar to the symplectic case, 
we will just indicate the outline. 
Let $I_{1}\ne I_{2}$ be two isotropic $K$-subspaces of ${\LK}$ 
of the same dimension, say $r$, 
and $Z_{1}, Z_{2}\subset {\X}$ the associated cusps. 
We make the following classification: 
\begin{enumerate}
\item $I_{1}\cap I_{2}\ne \{ 0 \}$;  
\item the pairing between $I_{1}$ and $I_{2}$ is perfect;  
\item $I_{1}\cap I_{2}= \{ 0 \}$ but the pairing between $I_{1}$ and $I_{2}$ is not perfect. 
\end{enumerate} 
  
(1) This is similar to \S \ref{ssec: symplectic case 1}. 
In this case $Z_{1}$ and $Z_{2}$ are joined by the cusp associated to 
$I_{1}\cap I_{2}$, to which we can apply the hypothesis of induction. 

(2) The case $r=1$ is similar to \S \ref{sssec: symplectic perfect k>0}. 
If we set $\Lambda_{K}'=I_{1}\oplus I_{2}$ and $\Lambda_{K}''=(\Lambda_{K}')^{\perp}$, 
these are nondegenerate of signature $(1, 1)$ and $(p-1, q-1)$ respectively. 
Then $Z_{1}$ and $Z_{2}$ are joined by 
the embedding 
$\mathcal{D}_{\Lambda'}\times \mathcal{D}_{\Lambda''} \hookrightarrow {\D}$. 
We can apply the Manin-Drinfeld theorem to $\mathcal{D}_{\Lambda'}$. 

The case $r>1$ is similar to \S \ref{sssec: symplectic perfect k=0}. 
We can interpolate $Z_{1}$ and $Z_{2}$ by a third cusp 
by taking a proper subspace $J_{1}\ne \{ 0\}$ of $I_{1}$ and 
setting $I_{3}=J_{1}\oplus (J_{1}^{\perp}\cap I_{2})$. 
Then we can apply the result of the case (1) to $I_{1}$ vs $I_{3}$ and to $I_{3}$ vs $I_{2}$. 

(3) This is similar to \S \ref{ssec: symplectic case 3}. 
We take splittings $I_{1}=J_{1}\oplus K_{1}$ and $I_{2}=J_{2}\oplus K_{2}$ 
such that $(J_{1}, I_{2})\equiv 0$, $(J_{2}, I_{1})\equiv 0$ and $(K_{1}, K_{2})$ perfect. 
We choose an isotropic subspace $J_{0}$ from $(K_{1}\oplus K_{2})^{\perp}$ 
with $(J_{1}, J_{0})$ and $(J_{2}, J_{0})$ perfect, 
and put $I_{3}=J_{0}\oplus K_{2}$ and $I_{4}=J_{0}\oplus K_{1}$. 
Then we apply 
the case (2) to $I_{1}$ vs $I_{3}$ and to $I_{4}$ vs $I_{2}$, 
and the case (1) to $I_{3}$ vs $I_{4}$ when $K_{i}\ne \{ 0 \}$. 
This proves Theorem \ref{thm: unitary}.

\begin{remark}\label{remark3} 
As in the symplectic case,  
we see that when $Z_{1}, Z_{2}$ are not top dimensional, 
their rational equivalence can be obtained through 
a chain of higher dimensional cusps of length $\leq 5$. 
\end{remark}

\section{Modular units and higher Chow cycles}\label{sec: higher modular unit}

Let ${\G}$, ${\D}$ and ${\X}$ be as in the previous sections. 
As a consequence of Theorems \ref{thm: orthogonal} -- \ref{thm: unitary}, 
we can associate to each pair of maximal cusps of ${\X}$ 
a nonzero higher Chow cycle of the modular variety ${\Y}={\G}\backslash{\D}$. 
This gives a higher dimensional analogue of modular units (\cite{K-L}) from the viewpoint of algebraic cycles. 
%This is also an explicit example of nonzero element of the higher Chow group. 

Let $Z_{1}\ne Z_{2}$ be two cusps of ${\X}$ of the same dimension, say $k$. 
By our result, we have $[Z_{1}]=\alpha [Z_{2}]$ in $CH_{k}({\X})_{{\Q}}$ for some 
$\alpha\ne 0 \in {\Q}$. 
On the other hand, we can also view $Z_{1}, Z_{2}$ as $k$-cycles on 
the boundary $\partial{\X}={\X}-{\Y}$, 
which is an equidimensional reduced closed subscheme of ${\X}$. 

\begin{lemma}\label{lem:(non)vanishing at boundary} 
When the cusps $Z_{1}, Z_{2}$ are not top dimensional,  
the equality $[Z_{1}]=\alpha[Z_{2}]$ holds already in $CH_{k}(\partial{\X})_{{\Q}}$. 
%(2) When $Z_{1}, Z_{2}$ are top dimensional cusps, we have 
%$[Z_{1}]\ne\alpha[Z_{2}]$ in $CH_{k}(\partial{\X})_{{\Q}}$. 
\end{lemma}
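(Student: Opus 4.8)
The plan is to revisit the chains of cusps built in the proofs of Theorems \ref{thm: orthogonal}--\ref{thm: unitary} and to observe that, once $Z_{1}, Z_{2}$ are not top dimensional, every rational equivalence used to connect them already takes place inside the boundary. Recall from Remarks \ref{remark1}--\ref{remark3} that in this situation $Z_{1}$ and $Z_{2}$ can be joined by a chain $Z_{1}=W_{0}, W_{1}, \dots, W_{m}=Z_{2}$ of $k$-dimensional cusps in which each consecutive pair $W_{j-1}, W_{j}$ is linked by a \emph{boundary} construction: the case $(I_{1}, I_{2})\equiv 0$ of \S\ref{sssec: 0dim cusp (I_{1}, I_{2})=0} in the orthogonal case, and case (1) (\S\ref{ssec: symplectic case 1} and its unitary analogue in \S\ref{ssec: induction unitary}) in the symplectic and unitary cases. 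The point is that each such step is realized by a proper morphism $f_{j}\colon V_{j}\to {\X}$ from a compactified modular variety $V_{j}$ whose image is a cusp of dimension strictly greater than $k$, together with $k$-cycles $A_{j}, B_{j}$ on $V_{j}$ satisfying $[A_{j}]=c_{j}[B_{j}]$ in $CH_{k}(V_{j})_{{\Q}}$ (supplied by the inductive hypothesis, respectively by the Manin-Drinfeld theorem) and with $f_{j,\ast}[A_{j}]$, $f_{j,\ast}[B_{j}]$ equal to positive rational multiples of $[W_{j-1}]$, $[W_{j}]$.

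The key mechanical step I would carry out is to factor each $f_{j}$ through the boundary. Since the image of $f_{j}$ is a positive-dimensional cusp, it is set-theoretically contained in $\partial{\X}$; because $V_{j}$ is reduced and $\partial{\X}\hookrightarrow {\X}$ is a reduced closed subscheme, $f_{j}$ factors as $f_{j}=\iota\circ g_{j}$ with $\iota\colon \partial{\X}\hookrightarrow {\X}$ the inclusion and $g_{j}\colon V_{j}\to\partial{\X}$ proper. Functoriality of proper pushforward then gives $f_{j,\ast}=\iota_{\ast}\circ g_{j,\ast}$, so applying $g_{j,\ast}$ to $[A_{j}]=c_{j}[B_{j}]$ yields $[W_{j-1}]=c_{j}[W_{j}]$ \emph{already} in $CH_{k}(\partial{\X})_{{\Q}}$. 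Concatenating over $j=1, \dots, m$ produces $[Z_{1}]=\alpha[Z_{2}]$ in $CH_{k}(\partial{\X})_{{\Q}}$ with $\alpha=\prod_{j}c_{j}\ne 0$, and pushing forward by $\iota_{\ast}$ recovers the equality of the main theorems, so the constant $\alpha$ agrees with the one there.

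I expect the only genuine issue to be confirming that the chosen chain never meets the interior of ${\X}$. The interior enters the proofs exactly in the top-dimensional links---the case $(I_{1},I_{2})\not\equiv 0$ for orthogonal $0$-cusps, and case (2) with $g'=1$ (\S\ref{sssec: symplectic perfect k>0}), respectively $r=1$, for symplectic and unitary cusps---where $Z_{1}, Z_{2}$ are joined by a modular curve, or a product $X'\times X''$, that maps into the interior of ${\X}$; there the factorization through $\partial{\X}$ fails. These are precisely the links that become unnecessary once $Z_{1}, Z_{2}$ are not top dimensional: in the orthogonal case one routes through a third $0$-cusp via an isotropic line in $(I_{1}\oplus I_{2})^{\perp}$ (Remark \ref{remark1}), while in the symplectic and unitary cases the subcases $g'>1$ (respectively $r>1$) of case (2) and all of case (3) reduce to case (1) by the interpolation arguments of \S\ref{sssec: symplectic perfect k=0} and \S\ref{ssec: symplectic case 3}. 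Checking that this reduction to boundary links is uniform across the three families is the main bookkeeping obstacle; once it is verified, the rest is the projection formula $f_{j,\ast}=\iota_{\ast}g_{j,\ast}$ applied term by term.
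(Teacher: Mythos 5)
Your proposal is correct and follows essentially the same route as the paper's own proof: connect $Z_{1}$ and $Z_{2}$ by a chain of maps onto higher-dimensional cusps (possible precisely because the cusps are not top dimensional, per Remarks \ref{remark1}--\ref{remark3}), apply induction on dimension (or Manin--Drinfeld) on each link, and use that each such map factors through $\partial{\X}$ to push the equalities forward into $CH_{k}(\partial{\X})_{{\Q}}$. Your explicit discussion of why the interior-type links (the $(I_{1},I_{2})\not\equiv 0$ and $g'=1$, $r=1$ cases) can be avoided is exactly the content the paper delegates to those remarks.
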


\begin{proof}
When $Z_{1}, Z_{2}$ are not top dimensional, 
the proof of Theorems \ref{thm: orthogonal} -- \ref{thm: unitary} 
and Remarks \ref{remark1}, \ref{remark2}, \ref{remark3} show that 
we can connect $Z_{1}$ and $Z_{2}$ by a chain of higher dimensional cusps. 
To be more precise, 
we have (congruence) modular varieties $X_{1}, \cdots, X_{N}$, %of the same dimension, 
their cusps $Z_{i}^{+}, Z_{i}^{-}\subset X_{i}$ of dimension $k$,  
and a finite morphism $f_{i}: X_{i}\to {\X}$ onto a cusp of ${\X}$, 
such that 
$f_{i}(Z_{i}^{-})=f_{i+1}(Z_{i+1}^{+})$ for each $i$ 
and $f_{1}(Z_{1}^{+})=Z_{1}$, $f_{N}(Z_{N}^{-})=Z_{2}$. 
By induction on dimension, we have 
$[Z_{i}^{+}]=\alpha_{i}[Z_{i}^{-}]$ in $CH_{k}(X_{i})_{{\Q}}$ 
for some $\alpha_{i}\in{\Q}$. 
Since $f_{i}$ factors through 
$X_{i}\to \partial X_{{\G}}\subset X_{{\G}}$, then  
$[f_{i}(Z_{i}^{+})]=\alpha_{i}'[f_{i}(Z_{i}^{-})]$ 
in $CH_{k}(\partial {\X})_{{\Q}}$ 
for some $\alpha_{i}'\in{\Q}$. 
It follows that 
$[Z_{1}]=(\prod_{i}\alpha_{i}')[Z_{2}]$ in $CH_{k}(\partial {\X})_{{\Q}}$. 
%
%Assertion (2) holds because in that case 
%$Z_{1}, Z_{2}$ are distinct irreducible components of $\partial{\X}$. 
\end{proof}

Consider the localization exact sequence of higher Chow groups (\cite{Bl}, \cite{Bl2}) 
for the Baily-Borel compactification 
\begin{equation*}
{\Y} \stackrel{j}{\hookrightarrow} {\X} \stackrel{i}{\hookleftarrow} \partial {\X}. 
\end{equation*}
The first few terms of this sequence are written as   
\begin{equation*}
\cdots \to CH_{k}({\X}, 1)_{{\Q}} 
\stackrel{j^{\ast}}{\to} CH_{k}({\Y}, 1)_{{\Q}} \stackrel{\delta}{\to} CH_{k}(\partial{\X})_{{\Q}} 
\stackrel{i_{\ast}}{\to} CH_{k}({\X})_{{\Q}} \to \cdots, %CH_{k}({\Y})_{{\Q}} \to 0. 
\end{equation*}
where $\delta$ is the connecting map. 
By Lemma \ref{lem:(non)vanishing at boundary}, 
the ${\Q}$-linear subspace of $CH_{k}(\partial{\X})_{{\Q}}$ 
generated by the $k$-dimensional cusps 
has dimension $1$ if $k$ is not the maximal dimension of cusps. 
%In this case, $j$ embeds this line into $CH_{k}({\X})_{{\Q}}$. 
On the other hand, when $k=\dim \partial{\X}$, 
the $k$-dimensional(=maximal) cusps are irreducible components of $\partial{\X}$, 
so $CH_{k}(\partial{\X})_{{\Q}}$ is freely generated over ${\Q}$ by those cusps. 
Let $t$ be the number of maximal cusps of ${\X}$. 
Since the image of 
$i_{\ast}\colon CH_{k}(\partial{\X})_{{\Q}} \to CH_{k}({\X})_{{\Q}}$ 
has dimension $1$ by Theorems \ref{thm: orthogonal} -- \ref{thm: unitary}, 
we find that 
\begin{equation*}
\dim {\rm Im} (\delta) \: = \: \dim {\rm Ker} (i_{\ast}) \: = \: t-1. 
\end{equation*}
Let us construct some explicit elements of 
$CH_{k}({\Y}, 1)_{{\Q}}$ %= \delta^{-1}({\rm Ker} (i_{\ast}))$ 
whose image by $\delta$ generate 
${\rm Im} (\delta) = {\rm Ker} (i_{\ast})$. 

%Since the cycle $Z_1-\alpha Z_2$ in $CH_{k}(\partial{\X})_{{\Q}}$ 
%is sent to $0$ by $j_{\ast}$, 
%there exists an element $W$ of $CH_{k}({\Y}, 1)_{{\Q}}$, 
%unique up to $i^{\ast}CH_{k}({\X}, 1)_{{\Q}}$, such that 
%$\delta(W)=Z_{1}-\alpha Z_{2}$ in $CH_{k}(\partial{\X})_{{\Q}}$. 
%By Lemma \ref{lem:(non)vanishing at boundary}, 
%we have $W\ne 0$ in $CH_{k}({\Y}, 1)_{{\Q}}/i^{\ast}CH_{k}({\X}, 1)_{{\Q}}$ 
%if and only if $Z_{1}, Z_{2}$ are top dimensional cusps. 

Let $Z_{1}\ne Z_{2}$ be two maximal cusps of ${\X}$, say of dimension $k=\dim \partial{\X}$.  
As above, we have 
$i_{\ast}(Z_{1}-\alpha Z_{2})=0$ in $CH_{k}({\X})_{{\Q}}$ 
for some $\alpha \in {\Q}$. 
We will construct an element of $CH_{k}({\Y}, 1)_{{\Q}}$ 
whose image by $\delta$ is $Z_{1}-\alpha Z_{2}$ in $CH_{k}(\partial{\X})_{{\Q}}$. 
(Such an element must be nonzero because 
$Z_{1}-\alpha Z_{2}$ is nonzero in $CH_{k}(\partial{\X})_{{\Q}}$.)  
Recall from the proof of Theorems \ref{thm: orthogonal} -- \ref{thm: unitary} that, 
in a basic case, 
we have  
a compactified modular curve $X'=X_{\Gamma'}$,  
its two cusps $p_{1}, p_{2}\in X'$, 
a $k$-dimensional compactified modular variety $X''=X_{\Gamma''}$, 
and a finite morphism $f\colon X'\times X'' \to {\X}$ 
such that $f(p_{i}\times X'')=Z_{i}$. 
(In the orthogonal case $X''$ is one point when $k=0$ and a modular curve when $k=1$; 
in the symplectic case $X''$ is a Siegel modular variety of genus $g-1$; 
in the unitary case $X''$ is associated to a unitary group of signature $(p-1, q-1)$.) 
The general case is a chain of such basic cases. 
For simplicity we assume that $(Z_{1}, Z_{2})$ is such a basic pair. 

By the Manin-Drinfeld theorem for $X'$, 
there exists a modular function $F$ on $X'$ such that 
${\rm div}(F)=\beta(p_{1}-p_{2})$ for some natural number $\beta$.   
Let $Y'\subset X'$ and $Y''\subset X''$ be the modular varieties before compactification. 
We can view $F$ as an element of $\mathcal{O}^{\ast}(Y')=CH_{0}(Y', 1)$. 
Then $\delta(F)=\beta(p_1 - p_2)$ for the connecting map 
$\delta\colon CH_{0}(Y', 1)\to CH_0(\partial X')$. 
Let $\pi:Y'\times Y''\to Y'$ be the projection and, 
by abuse of notation, 
$f:Y'\times Y''\to {\Y}$ be the restriction of $f:X'\times X''\to {\X}$. 
We can pullback the higher Chow cycle $F$ by the flat morphism $\pi$ 
and then take its pushforward by the finite morphism $f$. 
The result, $f_{\ast}\pi^{\ast}F$, 
is an element of $CH_{k}({\Y}, 1)$. 

\begin{proposition}
We have 
${\Q}\delta(f_{\ast}\pi^{\ast}F) = {\Q}(Z_{1}-\alpha Z_{2})$ 
in $CH_{k}(\partial{\X})_{{\Q}}$. 
\end{proposition}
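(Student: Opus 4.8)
The plan is to compare the localization sequence for $\Y\subset\X\supset\partial\X$ with the analogous one for the product, and to exploit the functoriality of Bloch's connecting map. Write $W=X'\times X''$, with interior $W^{\circ}=Y'\times Y''$ and boundary $\partial W=W\setminus W^{\circ}$. The finite morphism $f\colon W\to\X$ is induced from an embedding of domains compatible with the boundary stratifications, so it carries interior to interior and boundary to boundary; hence $f^{-1}(\Y)=W^{\circ}$, $f^{-1}(\partial\X)=\partial W$, and $f$ is proper. Bloch's localization sequence is covariantly functorial for such proper morphisms of triples (\cite{Bl}, \cite{Bl2}), so the connecting maps sit in a commutative square, giving
\begin{equation*}
\delta(f_{\ast}\pi^{\ast}F)=f_{\ast}\bigl(\delta_{W}(\pi^{\ast}F)\bigr),
\end{equation*}
where $\delta_{W}\colon CH_{k}(W^{\circ},1)\to CH_{k}(\partial W)$ is the connecting map for the product. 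Thus the task reduces to computing $\delta_{W}(\pi^{\ast}F)$ and then pushing it forward.

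For this computation, note that $W^{\circ}$ has dimension $k+1$, so that $\pi^{\ast}F\in CH_{k}(W^{\circ},1)$ is represented by the global unit $F\circ\pi\in\mathcal{O}^{\ast}(W^{\circ})$. Since $W=X'\times X''$ is normal, $F\circ\pi$ extends to a rational function on $W$, and $\delta_{W}$ sends it to the divisor of that extension, which, being a unit on $W^{\circ}$, is automatically supported on $\partial W$. As $\pi\colon W\to X'$ is flat, taking divisors commutes with $\pi^{\ast}$, so
\begin{equation*}
\delta_{W}(\pi^{\ast}F)=\operatorname{div}_{W}(F\circ\pi)=\pi^{\ast}\operatorname{div}_{X'}(F)=\beta\bigl([p_{1}\times X'']-[p_{2}\times X'']\bigr),
\end{equation*}
the stratum $X'\times\partial X''$ contributing nothing because $F\circ\pi$ is a unit along it. Each restriction $f|_{p_{i}\times X''}\colon X''\to Z_{i}$ is a finite surjection of $k$-dimensional varieties of some degree $d_{i}>0$, whence $f_{\ast}[p_{i}\times X'']=d_{i}[Z_{i}]$ and therefore
\begin{equation*}
\delta(f_{\ast}\pi^{\ast}F)=\beta\bigl(d_{1}[Z_{1}]-d_{2}[Z_{2}]\bigr).
\end{equation*}

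It remains to identify this class up to a nonzero rational scalar. Since $Z_{1}\ne Z_{2}$ are distinct irreducible components of $\partial\X$, they are linearly independent in $CH_{k}(\partial\X)_{\Q}$, and $\beta,d_{1},d_{2}>0$, so $\delta(f_{\ast}\pi^{\ast}F)\ne 0$. By exactness $i_{\ast}\circ\delta=0$, whence $d_{1}[Z_{1}]=d_{2}[Z_{2}]$ in $CH_{k}(\X)_{\Q}$. Because $\dim\operatorname{Im}(i_{\ast})=1$ we have $[Z_{2}]\ne 0$, so combining $d_{1}[Z_{1}]=d_{2}[Z_{2}]$ with $[Z_{1}]=\alpha[Z_{2}]$ forces $\alpha=d_{2}/d_{1}$. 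Hence $\beta(d_{1}[Z_{1}]-d_{2}[Z_{2}])=\beta d_{1}([Z_{1}]-\alpha[Z_{2}])$, giving $\Q\,\delta(f_{\ast}\pi^{\ast}F)=\Q(Z_{1}-\alpha Z_{2})$, as claimed. The step requiring the most care is the first one: one must invoke the proper-pushforward functoriality of Bloch's localization sequence together with the description of the connecting map on global units as the divisor map, for the Baily--Borel spaces $\X$ and $W$, which are only normal rather than smooth in general. Both properties hold for normal quasi-projective varieties, but this is where the argument rests and should be cited precisely.
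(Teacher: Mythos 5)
Your proof is correct, and it reaches the Proposition by a route that differs from the paper's in one essential technical choice. The paper desingularizes $X''$ (taking $\tilde{X}''\to X''$) precisely so that it can identify global units with $CH^{1}(\cdot,1)$ on smooth models and chase $F$ through a three-row diagram of localization sequences, ending with the identity $\tilde{f}_{\ast}\tilde{j}^{\ast}\tilde{\pi}^{\ast}=f_{\ast}\pi^{\ast}$. You instead work directly on the (possibly singular) product $W=X'\times X''$: you represent $\pi^{\ast}F$ by the graph cycle of the unit $F\circ\pi$ and compute $\delta_{W}$ as the Weil divisor of its rational extension, then push forward. This is legitimate: proper-pushforward functoriality of the localization sequence requires no smoothness, and the divisor computation takes place only at the generic points of $p_{i}\times X''$, whose local rings are DVRs because $X'$ is a smooth curve --- so the singularities of $X''$, which are exactly what force the paper's resolution, never enter your argument. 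Your route buys the avoidance of resolution of singularities and explicit constants: you obtain $\delta(f_{\ast}\pi^{\ast}F)=\beta(d_{1}[Z_{1}]-d_{2}[Z_{2}])$ and pin down $\alpha=d_{2}/d_{1}$ from $i_{\ast}\circ\delta=0$, a step the paper leaves implicit (its diagram chase produces ${\Q}(d_{1}Z_{1}-d_{2}Z_{2})$ and silently identifies this with ${\Q}(Z_{1}-\alpha Z_{2})$, which requires precisely your exactness argument). The paper's route buys that every ingredient ($\mathcal{O}^{\ast}=CH^{1}(\cdot,1)$, compatibility of $\delta$ with flat pullback and proper pushforward) is a quotable statement about smooth varieties, at the cost of verifying $\tilde{f}_{\ast}\tilde{j}^{\ast}\tilde{\pi}^{\ast}=f_{\ast}\pi^{\ast}$ through the resolution. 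The one place you should sharpen your write-up is the closing remark that the needed properties ``hold for normal quasi-projective varieties'': the clean statement that $\delta$ of a global unit is the divisor of its extension is standard only for smooth varieties, so on the singular $W$ it should be justified as the cycle-level computation (flat pullback of the graph cycle is the graph cycle of $F\circ\pi$; its closure in $W\times\Delta^{1}$ is an admissible lift; the boundary of that lift is the Weil divisor, supported on $\partial W$), not by citation; with that phrasing your argument is complete.
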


\begin{proof}
We take a desingularization $\tilde{X}''\to X''$ of $X''$, 
and let $\tilde{Y}''\subset \tilde{X}''$ be the inverse image of $Y''$.  
We have the commutative diagram 
\begin{equation*}
\begin{CD}
\mathcal{O}^{\ast}(Y') @>\tilde{\pi}^{\ast}>\simeq> 
  \mathcal{O}^{\ast}(Y'\times\tilde{X}'') @>\tilde{j}^{\ast}>> 
\mathcal{O}^{\ast}(Y'\times\tilde{Y}'') @.  \\ 
@| @| @| @. \\ 
CH_{0}(Y', 1) @>\tilde{\pi}^{\ast}>\simeq>   CH_{k}(Y'\times\tilde{X}'', 1) @>\tilde{j}^{\ast}>> 
CH_{k}(Y'\times\tilde{Y}'', 1) @>\tilde{f}_{\ast}>> CH_{k}(Y_{{\G}}, 1) \\ 
@V\delta VV @V\delta VV @V\delta VV @V\delta VV \\ 
CH_{0}(\partial X') @>\tilde{\pi}^{\ast}>\simeq> CH_{k}(\partial X'\times \tilde{X}'') @>\tilde{i}_{\ast}>> 
CH_{k}(\partial(X'\times\tilde{X}'')) @>\tilde{f}_{\ast}>> CH_{k}(\partial X_{{\G}}). 
\end{CD}
\end{equation*}
Here various $\delta$ are the connecting maps of each localization sequence, 
$\tilde{\pi}\colon X'\times \tilde{X}''\to X'$ the projection,   
$\partial(X'\times\tilde{X}'')=X'\times \tilde{X}'' - Y'\times \tilde{Y}''$, 
$\tilde{j}\colon Y'\times\tilde{Y}'' \hookrightarrow Y'\times \tilde{X}''$ the open immersion,  
$\tilde{i}:\partial X' \times \tilde{X}'' \hookrightarrow \partial(X'\times \tilde{X}'')$ 
the closed embedding, and  
$\tilde{f}\colon X'\times \tilde{X}''\to {\X}$ the proper morphism induced from $f$. 
If we send ${\Q}F\subset CH_{0}(Y', 1)_{{\Q}}$ 
through this diagram to $CH_{k}(\partial{\X})_{{\Q}}$, 
the image is ${\Q}(Z_{1}-\alpha Z_{2})$. 
The assertion follows by noticing that 
$\tilde{f}_{\ast}\tilde{j}^{\ast}\tilde{\pi}^{\ast} = f_{\ast} \pi^{\ast}$. 
\end{proof}

In this way, as a ``lift'' from the modular unit $F$, 
we obtain an explicit nonzero element of $CH_{k}({\Y}, 1)_{{\Q}}$ 
whose image by $\delta$ is $Z_1-\alpha Z_2$. 
If we run $(Z_{1}, Z_{2})$ over all basic pairs of maximal cusps, 
we obtain a set of nonzero elements of $CH_{k}({\Y}, 1)_{{\Q}}$ 
whose image by $\delta$ generate ${\rm Im}(\delta)={\rm Ker}(i_{\ast})$. 
In general, by this construction 
we could obtain more than $t-1$ higher Chow cycles on ${\Y}$. 
This is because    
\begin{enumerate}
\item the choice of $X'\times X'' \to {\X}$ is not necessarily unique for the given pair $(Z_{1}, Z_{2})$, and 
\item the number of basic pairs could be larger than $t-1$. 
\end{enumerate}
The point (1) amounts to the situation that 
two pairs $(I_{1}, I_{2})$, $(I_{1}', I_{2}')$ of isotropic subspaces 
are not ${\G}$-equivalent as pairs 
although $I_{1}$ is ${\G}$-equivalent to $I_{1}'$ and 
$I_{2}$ is ${\G}$-equivalent to $I_{2}'$ respectively. 
A typical situation of (2) is that 
for three cusps $Z_{1}, Z_{2}, Z_{3}$, 
all pairs $(Z_{1}, Z_{2})$, $(Z_{2}, Z_{3})$, $(Z_{3}, Z_{1})$ are basic. 
 
If the span $V\subset CH_{k}({\Y}, 1)_{{\Q}}$ of all higher Chow cycles 
constructed in this way  
has dimension $\geq t$, 
the kernel of $\delta \colon V\to CH_{k}(\partial{\X})_{{\Q}}$ 
would then give rise to a nontrivial subspace of $CH_{k}({\X}, 1)_{{\Q}}$. 

%In fact, the choice of $X'\times X'' \to {\X}$ is not unique for a given pair $(Z_{1}, Z_{2})$, 
%and we can also run $(Z_{1}, Z_{2})$ over all (basic) pairs of maximal cusps. 
%This produces a set of nonzero elements of $CH_{k}({\Y}, 1)_{{\Q}}$ 
%whose image by $\delta$ generate ${\rm Ker}j_{\ast}$. 
%If their span $V\subset CH_{k}({\Y}, 1)_{{\Q}}$ has dimension $\geq t$, 
%the kernel of $\delta \colon V\to CH_{k}(\partial{\X})_{{\Q}}$ 
%would then give rise to a nontrivial subspace of $CH_{k}({\X}, 1)_{{\Q}}$. 

%%%%%%% Reference %%%%%%%%%%%%%%%%%%%%%%%%%%%%%

\end{document}